\newtheorem{theorem}{Theorem}[section]
\newtheorem{lemma}[theorem]{Lemma}
\newtheorem{condition}[theorem]{Condition}
\newtheorem{corollary}[theorem]{Corollary}
\theoremstyle{remark}
\newtheorem{remark}[theorem]{Remark}
\numberwithin{equation}{section}
\newcommand{\ecc}{\ensuremath{\mathfrak{h}}}
\newcommand{\vcc}{\ensuremath{h}}
\newcommand{\edge}{\ensuremath{\mathrm{edge}}}
\newcommand{\ver}{\ensuremath{\mathrm{vert}}}
\newcommand{\scrV}{\ensuremath{\mathscr{V}}}
\newcommand{\calG}{\ensuremath{\mathcal{G}}}
\newcommand{\calO}{\ensuremath{\mathcal{O}}}
\newcommand{\mo}{{-1}}
\newcommand{\scrP}{\ensuremath{\mathscr{P}}}
\newcommand{\calN}{\ensuremath{\mathcal{N}}}
\newcommand{\xra}{\xrightarrow}
\newcommand{\bbC}{\ensuremath{\mathbb{C}}}
\newcommand{\vol}{\ensuremath{\mathrm{vol}}}
\newcommand{\calX}{\ensuremath{\mathcal {X}}}
\newcommand{\calI}{\ensuremath{\mathcal I}}
\title{A Cheeger inequality for the lower spectral gap}
\author{Jyoti Prakash Saha}
\address{Department of Mathematics, Indian Institute of Science Education and Research Bhopal, Bhopal Bypass Road, Bhauri, Bhopal 462066, Madhya Pradesh,
India}
\curraddr{}
\email{jpsaha@iiserb.ac.in}
\thanks{}
\subjclass[2010]{05C25, 05C50}
\keywords{Cheeger inequality, bipartiteness constant, spectral gap, Cayley graph, vertex-transitive graph}
\begin{document}

\maketitle

\begin{abstract}
Let $\Gamma$ be a Cayley graph, or a Cayley sum graph, or a twisted Cayley graph, or a twisted Cayley sum graph, or a vertex-transitive graph. Denote the degree of $\Gamma$ by $d$, its edge Cheeger constant by $\mathfrak{h}_\Gamma$, and its vertex Cheeger constant by $\vcc_\Gamma$. Assume that $\Gamma$ is undirected, non-bipartite. We prove that the edge bipartiteness constant of $\Gamma$ is $\Omega({\mathfrak{h}_\Gamma}/{d})$, the vertex bipartiteness constant of $\Gamma$ is $\Omega(\vcc_\Gamma)$, and the smallest eigenvalue of the normalized adjacency operator of $\Gamma$ is $-1 + \Omega({\vcc_\Gamma^2}/{d^2})$. This answers in the affirmative a question of Moorman, Ralli and Tetali on the lower spectral gap of Cayley sum graphs. 
\end{abstract}

\section{Introduction}

In the following, $\Gamma$ denotes a finite, undirected, non-bipartite, $d$-regular graph on a set of $n$ vertices. 
The edge Cheeger constant (resp. the vertex Cheeger constant, the edge bipartiteness constant) of $\Gamma$ is denoted by $\ecc_\Gamma$ (resp. $\vcc_\Gamma, \beta_{\edge, \Gamma}$). 
The gap between the largest eigenvalue (which is equal to $1$) and the second largest eigenvalue (denoted by $\mu_2$) of the normalized adjacency operator of $\Gamma$ is called the \emph{spectral gap} of $\Gamma$. 
The gap between $-1$ and the smallest eigenvalue (denoted by $\mu_n$) of the normalized adjacency operator of $\Gamma$ is called the \emph{lower spectral gap} of $\Gamma$.

The discrete Cheeger--Buser inequality, established by Dodziuk \cite{DodziukDifferenceEqnIsoperimetricIneq}, Alon and Milman \cite{AlonMilmanIsoperiIneqSupConcen}, and Alon \cite{AlonEigenvalueExpand}, states that the edge Cheeger constant of a finite graph $\Gamma$ and its spectral gap satisfy 
$$
\frac{\ecc_\Gamma^2}2 \leq 1 - \mu_2 \leq 
2 \ecc_\Gamma.$$
The problem of obtaining bounds for the lower spectral gap of $\Gamma$ is of considerable interest. 
Trevisan \cite{TrevisanMaxCutTheSmallestEigenVal} established an inequality, which states that the edge bipartiteness constant of a finite graph $\Gamma$ and its lower spectral gap satisfy 
$$
\frac{\beta_{\edge, \Gamma}^2}2
\leq 
1 + \mu_n
\leq 2\beta_{\edge, \Gamma}.
$$
The bound $1 + \mu_n \geq 
\frac{\beta_{\edge, \Gamma}^2}2$, due to Trevisan \cite[Equation (9)]{TrevisanMaxCutTheSmallestEigenVal}, was also obtained by Bauer and Jost 
\cite[p. 804--805]{BauerJostBipartiteNbdGraphsSpecLaplaceOp} using a technique due to Desai and Rao \cite{DesaiRaoCharacSmallestEigenvalue}. 

The lower spectral gap of graphs have been studied by several authors. 
Breuillard, Green, Guralnick and Tao showed  that for Cayley graphs, combinatorial expansion implies two-sided spectral expansion \cite[Appendix E]{BGGTExpansionSimpleLie}. 
Knox and Mohar proved that the lower spectral gap of an undirected, regular, non-bipartite, distance-regular graph with odd girth $g$ is greater than or equal to $1 - \cos \frac \pi g$ 
\cite[Corollary 5]{KnoxMoharFractionalDecomSmallestEigenVal}, which generalizes and strengthens a result of Qiao, Jing and Koolen \cite[Theorem 1]{QiaoJingKoolenNonBipartiteDistRegular}.

Moorman, Ralli and Tetali obtained a bound on the smallest non-trivial eigenvalue of the normalized adjacency operator of a Cayley graph. 
They proved that the vertex bipartiteness constant $\beta_{\ver, \Gamma}$ of a Cayley graph $\Gamma$ is $\Omega({\vcc_\Gamma})$, and applied Trevisan's inequality to establish that 
the lower spectral gap of the normalized adjacency operator of $\Gamma$ is $-1 + \Omega(\frac{\vcc_\Gamma^2}{d^2})$ \cite[Theorem 2.6]{CayleyBottomBipartite}. 
This improves the prior bound due to Biswas \cite{BiswasCheegerCayley}. 
They also pointed out that having a bound of the form $\beta_{\edge, \Gamma} = \Omega(\frac{\ecc_\Gamma}d)$ for Cayley graphs would be interesting, and established  an upper bound on $\ecc_\Gamma$ in terms of $\beta_{\edge, \Gamma}$ and $d$ for such graphs \cite[Theorem 2.5]{CayleyBottomBipartite}, which yields the bound $\beta_{\edge, \Gamma} = \Omega(\frac{\ecc_\Gamma}d)$ for Cayley graphs. 

Further, Moorman, Ralli and Tetali asked if the lower spectral gap of a non-bipartite Cayley sum graph admits a lower bound similar to the one for the lower spectral gap of Cayley graphs obtained by them in \cite[Theorem 2.6]{CayleyBottomBipartite}.  
In this direction, we have \cref{Thm:IntroCayleySum}. As a consequence, we deduce \cref{Cor:Bdd}, which improves the bounds obtained by Biswas and the author \cite{CheegerCayleySum,CheegerTwisted,VertexTra}. 

\begin{theorem}
\label{Thm:IntroCayleySum}
Let $\Gamma$ be a Cayley sum graph, or a twisted Cayley graph, or a twisted Cayley sum graph, or a vertex-transitive graph. Denote the degree of $\Gamma$ by $d$. Assume that $\Gamma$ is undirected, non-bipartite and has finitely many vertices.
Then, the inequalities  
$
\beta_{\edge, \Gamma }
\geq
\frac
{\ecc_\Gamma}
{90 d}
$, 
$
\beta_{\ver, \Gamma }
\geq
\frac
{\vcc_\Gamma}
{135}
$
hold. 
\end{theorem}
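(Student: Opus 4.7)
Both inequalities, $\beta_{\edge,\Gamma} \ge \ecc_\Gamma/(90d)$ and $\beta_{\ver,\Gamma} \ge \vcc_\Gamma/135$, are equivalent to upper bounds on the Cheeger constants in terms of the bipartiteness constants. So my plan is to start from an optimal bipartiteness-witnessing tripartition $V = L \sqcup R \sqcup C$, with $T := L \sqcup R$, and exhibit from it a Cheeger set $U$ of size at most $n/2$ whose boundary (edge for the first inequality, vertex for the second) is bounded by a constant times the \emph{bad} edges/vertices of the tripartition: edges inside $L$ or $R$ or joining $T$ to $C$, respectively the vertices incident to such edges. This is an extension of Moorman--Ralli--Tetali's Theorems 2.5 and 2.6 from Cayley graphs to the four classes appearing in the theorem.

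The argument naturally splits on the size of $T$. When $|T| \le n/2$, the set $T$ is itself a valid Cheeger candidate: its edge boundary $E(T, C)$ is contained in the bad-edge set of $(L,R)$, and similarly for the vertex boundary. The desired inequality follows immediately in this easy regime, well within the stated constants. The main work is the complementary regime $|T| > n/2$, in which $T$ is too large and $C$ too small to use directly.

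For the hard regime, I would invoke the symmetry of $\Gamma$. Each of the four classes admits a transitive group $G$ of automorphisms, though the precise form of the action differs: for Cayley graphs it is left translation, for Cayley sum graphs it is the affine involution $x \mapsto g - x$ with $2g = 0$, and for the twisted versions these are combined with the defining group automorphism; in the vertex-transitive case $G$ is simply the full transitive automorphism group. For any such $\sigma \in G$, the translate $(\sigma L, \sigma R)$ is again an optimal witness with the same bad-edge count. I would then look for a $\sigma$ under which the symmetric difference $L \triangle \sigma L$ (or one half $L \setminus \sigma L$, or a related piece such as $L \cap \sigma R$) simultaneously has size at most $n/2$, size at least a constant fraction of $|T|$, and edge boundary dominated by the bad edges of the original plus translated tripartition, i.e.\ at most a constant multiple of $\beta_{\edge,\Gamma} \cdot d \cdot |T|$. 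A pigeonhole or averaging argument over $\sigma$ ranging in a suitable subset of $G$ (for instance, the generators $S$ in the Cayley-type cases) should produce such a $\sigma$, because the total deviation $\sum_\sigma |L \triangle \sigma L|$ and the total boundary $\sum_\sigma |\partial(L\triangle\sigma L)|$ are both controlled in terms of $|T|$ and the bad-edge count.

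The principal obstacle is twofold: first, calibrating the constants uniformly across the four symmetry types — in particular, in the Cayley-sum case the affine involution swaps the roles of $L$ and $R$ under translation, so the bad-edge bookkeeping differs from the translation-equivariant Cayley case; second, bounding $|\partial(L\triangle\sigma L)|$ cleanly, which requires decomposing each such boundary edge as either a bad edge for $(L,R)$ or a bad edge for $(\sigma L, \sigma R)$. The final constants $90$ and $135$ absorb a case analysis on the relative sizes of $|L|$, $|R|$, $|C|$ and on which of the three bad-edge components dominates. For the vertex inequality the outline applies verbatim with edge boundaries replaced by vertex boundaries; the degree factor $d$ disappears because vertex boundaries scale directly with vertex counts rather than being magnified by vertex degree.
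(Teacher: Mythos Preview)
Your outline rests on a false structural claim: Cayley sum graphs (and their twisted variants) do \emph{not} in general admit a transitive group of graph automorphisms. Left translation $x\mapsto gx$ fails to carry edges to edges in $C_\Sigma(G,S)$, and your proposed ``affine involution $x\mapsto g-x$ with $2g=0$'' neither yields a transitive action nor makes sense for non-abelian $G$. This is exactly why Moorman--Ralli--Tetali posed the Cayley-sum case as a question rather than recording it as a corollary; the averaging-over-generators argument you sketch is their Cayley-graph proof, and it relies essentially on left translations being automorphisms while right multiplications by $S$ give the neighbour maps. That coupling is absent here, so the pigeonhole step as you describe it cannot be carried out.

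The paper replaces automorphisms by a weaker twisted equivariance: for each $g$ there is a permutation $\varsigma_g$ of $V$ (right multiplication by $g^{-1}$ in the Cayley-sum case), not an automorphism, such that $|E(A,B)|=|E(gA,\varsigma_g B)|$ for all $A,B$. The Cheeger candidates are then $(L\cap\tau L)\cup(R\cap\varsigma_\tau R)$ and $(L\cap\tau R)\cup(R\cap\varsigma_\tau L)$, with \emph{different} group actions on the $L$- and $R$-parts; your set $L\triangle\sigma L$ misses this asymmetry. To locate a good $\tau$, the paper does not average over $S$ but invokes a Fre\u{\i}man-type trapping lemma over the whole group: either some $\tau\in\mathcal G$ satisfies $|L\cap\tau L|\in(\delta|L|,(1-\delta)|L|)$, or the set $H_\delta=\{\tau:|L\cap\tau L|\ge(1-\delta)|L|\}$ is an index-two subgroup of $\mathcal G$ whose two orbits nearly partition $L$. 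The second branch is then eliminated by showing it would force $\Gamma$ to be bipartite. This dichotomy, and the use of non-bipartiteness to kill the index-two alternative, is the missing idea.
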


Applying Trevisan's inequality $1 + \mu_n \geq 
\frac{\beta_{\edge, \Gamma}^2}2$ and the bound 
$\beta_{\edge, \Gamma} \geq \frac{\beta_{\ver, \Gamma}}d$ \cite[Theorem 2.3]{CayleyBottomBipartite}, we obtain the following corollary of the above result. 

\begin{corollary}
\label{Cor:Bdd}
Let $\Gamma$ denote a graph as in 
\cref{Thm:IntroCayleySum}. Then its lower spectral gap is $ \Omega(\frac{\vcc_\Gamma^2}{d^2})$.
\end{corollary}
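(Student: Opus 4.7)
The plan is to chain together three ingredients already available: \cref{Thm:IntroCayleySum} from the excerpt, the edge-vs-vertex bipartiteness comparison $\beta_{\edge,\Gamma} \geq \beta_{\ver,\Gamma}/d$ from \cite[Theorem 2.3]{CayleyBottomBipartite}, and Trevisan's inequality $1 + \mu_n \geq \beta_{\edge,\Gamma}^2/2$. Since the corollary is deduced from results stated just above it, the proof is essentially an arithmetic composition; no new combinatorial idea is needed.

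Concretely, I would proceed as follows. First, invoke \cref{Thm:IntroCayleySum} applied to $\Gamma$ (which belongs to one of the listed families and is undirected, non-bipartite, and finite) to obtain the inequality
\[
\beta_{\ver, \Gamma} \geq \frac{\vcc_\Gamma}{135}.
\]
Next, combine this with $\beta_{\edge,\Gamma} \geq \beta_{\ver,\Gamma}/d$ from \cite[Theorem 2.3]{CayleyBottomBipartite} to obtain
\[
\beta_{\edge, \Gamma} \geq \frac{\vcc_\Gamma}{135\, d}.
\]
Finally, substitute into Trevisan's lower bound $1 + \mu_n \geq \beta_{\edge,\Gamma}^2/2$ to conclude
\[
1 + \mu_n \;\geq\; \frac{\vcc_\Gamma^2}{2 \cdot 135^2\, d^2},
\]
which is the claimed lower spectral gap bound of order $\Omega(\vcc_\Gamma^2/d^2)$.

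Since each ingredient has an explicit constant, one could keep track of the exact multiplier $1/(2\cdot 135^2)$, but the statement of the corollary only asks for the asymptotic order, so this is cosmetic. The one point that needs a brief check is that \cite[Theorem 2.3]{CayleyBottomBipartite}, originally phrased for Cayley graphs, in fact only uses the $d$-regularity of $\Gamma$; the inequality $\beta_{\edge,\Gamma} \geq \beta_{\ver,\Gamma}/d$ follows from the observation that any edge in the edge-bipartite-boundary of a partition is incident to a vertex in the corresponding vertex-bipartite-boundary, so the edge count is at most $d$ times the vertex count. Given that reassurance, there is no substantive obstacle: the genuinely hard work has been absorbed into \cref{Thm:IntroCayleySum}, and the corollary is a one-line composition of the three displayed inequalities.
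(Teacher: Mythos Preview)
Your argument is correct and matches the paper's own derivation exactly: the sentence immediately preceding the corollary spells out the same chain (Trevisan's inequality $1+\mu_n\ge \beta_{\edge,\Gamma}^2/2$ together with $\beta_{\edge,\Gamma}\ge\beta_{\ver,\Gamma}/d$ from \cite[Theorem~2.3]{CayleyBottomBipartite}, combined with \cref{Thm:IntroCayleySum}). One small slip in your final paragraph: the reason $d\,\beta_{\edge,\Gamma}\ge\beta_{\ver,\Gamma}$ holds for any $d$-regular graph is that each vertex counted in the vertex-bipartite boundary witnesses at least one edge counted in the edge-bipartite boundary, so the \emph{vertex} count is at most the \emph{edge} count---your sentence has this comparison reversed, which would yield the wrong inequality.
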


The bound on the lower spectral gap is obtained by using the bound on the bipartiteness constant and applying Trevisan's inequality. 
Since $\vcc_\Gamma \geq \ecc_\Gamma$ holds, it follows that the spectral gap of $\Gamma$ is $-1 + \Omega(\frac{\ecc_\Gamma^2}{d^2})$ for $\Gamma$ as in \cref{Thm:IntroCayleySum}. 
Note that upto a quadratic factor of the degree, this lower bound on the lower spectral gap  is analogous to the lower bound on the spectral gap provided by the discrete Cheeger--Buser inequality.  

For a non-bipartite, vertex-transitive graph $\Gamma$ of degree $d$, it was shown in \cite[Theorem 3.5.(3)]{VertexTra} that 
$
\vcc_{\Gamma^2} = 
\Omega
(
\frac
{\vcc_\Gamma^2}
{d^3}
)
$. 
If $\Gamma$ is a Cayley graph, or a Cayley sum graph, or a twisted Cayley graph, or a twisted Cayley sum graph of degree $d$ and $\Gamma$ is non-bipartite, then \cite[Theorem 3.5.(2)]{VertexTra} states that 
$
\vcc_{\Gamma^2} = 
\Omega
(
\frac
{\vcc_\Gamma^2}
{d}
)
$. 
The following result provides an analogue of \cite[Theorem 3.5]{VertexTra}. 
The bound for $\ecc_{\Gamma^2}$ in terms of $\ecc_\Gamma$ and $d$ provided by \cref{Thm:EccEccBddAlgGraph} 
is stronger than the ones that can be obtained from \cite[Theorem 2.6]{CayleyBottomBipartite} for Cayley graphs, and those provided by \cite[Theorem 3.5]{VertexTra}.

\begin{theorem}
\label{Thm:EccEccBddAlgGraph}
Let $\Gamma$ be a Cayley graph, or a Cayley sum graph, or a twisted Cayley graph, or a twisted Cayley sum graph, or a vertex-transitive graph. Denote the degree of $\Gamma$ by $d$. Assume that $\Gamma$ is undirected, non-bipartite and has finitely many vertices. Then 
the inequality 
$\ecc_{\Gamma^2} 
> 
\frac
{\ecc_\Gamma^2}
{48 d}$ 
holds. 
\end{theorem}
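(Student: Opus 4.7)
Fix $S\subseteq V(\Gamma)$ with $|S|\le |V(\Gamma)|/2$ and write $s_w:=|N_\Gamma(w)\cap S|$ for every $w\in V(\Gamma)$. Classifying ordered length-two walks from $S$ to $V(\Gamma)\setminus S$ by their middle vertex yields the identity
\[
|\partial_{\Gamma^2}S|\;=\;\sum_{w\in V(\Gamma)}s_w(d-s_w),
\]
so the task reduces to a lower bound on the right-hand side. Writing $h:=\ecc_\Gamma$, the goal is to exceed $d\,h^{2}|S|/48$.

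The plan is a case analysis at the threshold $hd/2$. Partition $S=A\sqcup B$ and $V(\Gamma)\setminus S=A^{*}\sqcup B^{*}$ with $A=\{w\in S:s_w\ge hd/2\}$ and $A^{*}=\{w\in V(\Gamma)\setminus S:d-s_w\ge hd/2\}$. The definition of $\ecc_\Gamma$ forces $|\partial_\Gamma S|=\sum_{w\in S}(d-s_w)\ge hd|S|$; if the $A$-portion of this sum exceeds $hd|S|/2$, the pointwise bound $s_w\ge hd/2$ on $A$ already gives
\[
\sum_{w\in A}s_w(d-s_w)\;\ge\;(hd/2)\cdot hd|S|/2\;=\;h^{2}d^{2}|S|/4,
\]
much larger than the target. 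A symmetric argument handles the case when the $A^{*}$-portion of $\sum_{w\in V\setminus S}s_w$ is large. In the remaining ``unfavourable'' case, both $|B|$ and $|B^{*}|$ exceed $h|S|/2$, and the adjusted partition $(B\cup A^{*},\,A\cup B^{*})$ of $V(\Gamma)$---which coincides with the true bipartition whenever $\Gamma$ is bipartite with parts $(S,V\setminus S)$---is nearly bipartite. Theorem~\ref{Thm:IntroCayleySum} then gives
\[
2\,e(B\cup A^{*})+2\,e(A\cup B^{*})\;\ge\;hn/90,
\]
and each edge contributing to the left-hand side has a quantifiable contribution to $\sum_w s_w(d-s_w)$: internal edges of $B$ or $B^{*}$ contribute at least $d(1-h/2)\ge d/2$ through their endpoints, while boundary edges in $e(B,A^{*})\cup e(A,B^{*})$ have extension count $s_u+(d-s_v)\ge hd/2$, and edges in $e(A,A^{*})$ contribute at least $hd$. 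Combining these contributions with the bipartiteness bound and using $n\ge h|S|$ yields a lower bound on $|\partial_{\Gamma^2}S|$ of the correct order.

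The main obstacle is the last step, where the bipartiteness lower bound can in principle concentrate on the edge types $e(A)$ and $e(A^{*})$---edges internal to $A\cup A^{*}$ that affect the identity only through the non-linear combinatorics of the sum $\sum_w s_w(d-s_w)$ and hence are the hardest to convert into guaranteed mass. Extracting the precise constant $1/48$ requires either treating the threshold as a free parameter $\tau$ and optimising at the end, or strengthening the candidate bipartition to peel off these expensive edge types first. The same difficulty is resolved in the corresponding vertex-level statement $\vcc_{\Gamma^{2}}=\Omega(\vcc_\Gamma^{2}/d)$ from~\cite{VertexTra}, whose argument I would adapt to recover the constant $1/48$ once the combinatorial framework above is in place.
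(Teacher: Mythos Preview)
Your proposal is explicitly incomplete: you identify the ``main obstacle'' (edges internal to $A\cup A^{*}$ that do not straightforwardly contribute to $\sum_w s_w(d-s_w)$) and then defer to ``adapting'' an argument from \cite{VertexTra} without carrying it out. That is not a proof but a plan with an acknowledged gap. Moreover, even if the gap were filled, the arithmetic does not close: you feed in the bipartiteness bound $\beta_{\edge,\Gamma}\ge \ecc_\Gamma/(90d)$ from Theorem~\ref{Thm:IntroCayleySum}, which yields at best $2e(L)+2e(R)\ge \ecc_\Gamma\, n/90$ for your candidate bipartition $(L,R)$. Converting each such edge into a contribution of order $\ecc_\Gamma d$ and using $n\ge 2|S|$ gives $\sum_w s_w(d-s_w)\gtrsim \ecc_\Gamma^{2}d|S|/90$, which is strictly weaker than the target $\ecc_\Gamma^{2}d|S|/48$. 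Bootstrapping off the constant $90$ cannot produce the constant $48$; the latter does not come from the bipartiteness theorem at all.

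The paper's proof proceeds along entirely different lines and does not invoke Theorem~\ref{Thm:IntroCayleySum}. It works directly with the operator $T^{2}$: assuming $\psi=\ecc_{T^{2}}$ is small, a sequence of counting lemmas (Lemmas~\ref{Lemma:BddXNXThroughX}--\ref{Lemma:PreDichotomy}) shows that for the optimal set $A$ and every $g\in\calG$, the quantity $\vol_T(A\cap gA)$ is either close to $\vol_T(A)$ or close to $0$ (Lemma~\ref{Lemma:Dichotomy}). This dichotomy feeds into the Fre\u{\i}man-type trapping lemma (Lemmas~\ref{Lemma:IndexTwoSubgroup}, \ref{Prop}) to produce an index-two subgroup whose orbit structure would force $\Gamma$ to be bipartite, a contradiction. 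The constant $48$ arises from optimising the parameter $x=d\psi(1+\ecc_T)/\ecc_T$ in this argument with $\mu=2$, independently of any bipartiteness-constant bound.
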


\cref{Thm:IntroCayleySum}
follows from Theorems \ref{Thm:betaEdgeEcc}, \ref{Thm:betaEdgeVcc}. 
\cref{Thm:EccEccBddAlgGraph}
is established using \cref{Thm:Sec2Intro} at the end of \cref{Sec:CheegerConstantOfSquareGraph}.

\begin{remark}
Soon after a preprint of this work was posted on arXiv, Hu and Liu established stronger bounds on the lower spectral gap of Cayley graphs, Cayley sum graphs, and vertex-transitive graphs which are undirected and non-bipartite \cite{HuLiuVertexIsoperimetrySignedGraph}. 
First, they proved a result \cite[Theorem 1.2]{HuLiuVertexIsoperimetrySignedGraph} extending a work of 
Bobkov, Houdr\'{e} and Tetali \cite{BobkovHoudreTetalisLambdaInfVertexIsoperimetry}. 
This implies that for a $d$-regular graph $\Gamma$, the lower spectral gap is $\Omega(\frac{\beta_{\ver, \Gamma}^2}{16d})$ \cite[Equation (1.7)]{HuLiuVertexIsoperimetrySignedGraph}.
Next, they showed that the non-bipartite Cayley sum graphs and the non-bipartite  vertex-transitive graphs satisfy a bound of the form $\beta_{\ver, \Gamma} = \Omega(\vcc_\Gamma)$ \cite[Theorems 5.3, 5.7]{HuLiuVertexIsoperimetrySignedGraph}.
Combining these two bounds together, they obtained \cite[Theorem 1.3]{HuLiuVertexIsoperimetrySignedGraph}, which provides the bound $\Omega (\frac{\vcc_\Gamma^2}d)$ on the lower spectral gap of non-bipartite Cayley sum graphs and non-bipartite vertex-transitive graphs. 

We remark that combining the result of Hu and Liu \cite[Theorem 1.2]{HuLiuVertexIsoperimetrySignedGraph} with \cref{Thm:IntroCayleySum}, \cref{Cor:Bdd} can be improved. Indeed, let $\Gamma$ be a graph as in 
\cref{Thm:IntroCayleySum}. By \cref{Thm:IntroCayleySum}, the bound $\beta_{\ver, \Gamma} = \Omega(\vcc_\Gamma)$ holds.
\cite[Equation (1.7)]{HuLiuVertexIsoperimetrySignedGraph} provides the bound $\Omega(\frac{\beta_{\ver, \Gamma}^2}{16d})$ 
for the lower spectral gap of $\Gamma$. 
These two bounds together yield the bound $\Omega(\frac{\vcc_\Gamma^2}{d})$ on the lower spectral gap of $\Gamma$. 
\end{remark}

\section{A trapping lemma}

We apply a technique of Fre\u{\i}man \cite{FreimanGroupsInverseProb} to obtain the following.

\begin{lemma}
\label{Lemma:IndexTwoSubgroup}
Let $V$ be a nonempty finite set. 
Let $\scrV$ be a nonempty subset of $V$. Assume that a finite group $\calG$ acts transitively on $V$ and no index two subgroup of $\calG$ acts transitively on $V$. 
Let $\xi, \zeta, \delta$ be non-negative real numbers satisfying the inequalities
\begin{equation}
\label{Eqn:XiZetaKappaBdd1}
\frac{1 - \xi}2
\leq
\frac{|\scrV| }{|V|}\leq \frac {1 + \zeta} 2, 
\delta >0, 
\delta < \frac {1 - \zeta}2, 
\delta < \frac {1- 3\xi}4.
\end{equation}
Suppose
$$
|\scrV \cap \tau(\scrV)| 
\in 
(\delta |\scrV| , (1- \delta)|\scrV|)
$$
holds for no $\tau\in \calG$. 
Then 
the subset $H_\delta$ of $\calG$ defined by 
$
H_\delta
 := 
\{
\tau\in \calG \,:\,\, |\scrV \cap  (\tau(\scrV))| \geq ( 1 - \delta) |\scrV|
\}$
forms a subgroup of $\calG$ of index two, and for some orbit $\calO$ of some element of $V$ under the action of $H_\delta$, 
we have 
$
|\scrV \cap \calO^c| \leq \sqrt {\frac{\delta (1+ \zeta) } 2 } \frac{|V|}2$. 
\end{lemma}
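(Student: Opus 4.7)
The plan is to proceed in three stages, following a Fre\u{\i}man-style trapping argument: first show that $H_\delta$ is a subgroup of $\calG$, then use a double-counting argument to force its index to be exactly two, and finally use a second double-counting restricted to the nontrivial coset to control $|\scrV\cap\calO^c|$ for the right choice of orbit. The identity and inverse properties of $H_\delta$ are immediate since the action preserves sizes of intersections; for closure, given $\tau_1,\tau_2\in H_\delta$, combining the identity $\tau_1(\scrV)\cap\tau_1\tau_2(\scrV)=\tau_1(\scrV\cap\tau_2(\scrV))$ with $|\tau_1(\scrV)\setminus\scrV|\leq\delta|\scrV|$ yields $|\scrV\cap\tau_1\tau_2(\scrV)|\geq(1-2\delta)|\scrV|$. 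The hypothesis $\delta<(1-3\xi)/4<1/3$ makes $1-2\delta>\delta$, so the dichotomy in the hypothesis forces $\tau_1\tau_2\in H_\delta$.

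To pin down the index, note that orbit-stabilizer gives $\sum_{\tau\in\calG}|\scrV\cap\tau(\scrV)|=|\scrV|^2|\calG|/|V|$, so the average of $|\scrV\cap\tau(\scrV)|/|\scrV|$ over $\tau\in\calG$ equals $\alpha:=|\scrV|/|V|$. Splitting this average according to membership in $H_\delta$ (where the summand is at least $1-\delta$) and in its complement (where it is at most $\delta$) produces the two-sided bound $(1-\delta)h\leq\alpha\leq\delta+(1-\delta)h$, with $h:=|H_\delta|/|\calG|$. The assumption $\delta<(1-\zeta)/2$ combined with $\alpha\leq(1+\zeta)/2$ rules out $h=1$, and $\delta<(1-3\xi)/4$ combined with $\alpha\geq(1-\xi)/2$ rules out $h\leq 1/3$; since the index $1/h$ is a positive integer, this forces $h=1/2$.

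A subgroup of index two is normal, so $\calG/H_\delta$ permutes the $H_\delta$-orbits on $V$ transitively and all orbits have equal size. The hypothesis that no index-two subgroup of $\calG$ is transitive on $V$ therefore forces exactly two orbits $\calO_1,\calO_2$ of size $|V|/2$, with every $\tau\notin H_\delta$ swapping them. Writing $\scrV_i:=\scrV\cap\calO_i$, each $\tau\in\calG\setminus H_\delta$ contributes $|\scrV_1\cap\tau(\scrV_2)|+|\scrV_2\cap\tau(\scrV_1)|$ to $|\scrV\cap\tau(\scrV)|$. Summing over the nontrivial coset, orbit-stabilizer gives $2|\scrV_1||\scrV_2||\calG|/|V|$ on the left, while the pointwise bound yields at most $\delta|\scrV||\calG|/2$ on the right, so $|\scrV_1||\scrV_2|\leq\delta|\scrV||V|/4$. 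Taking $\calO$ to be the larger of $\calO_1,\calO_2$ and using $|\scrV|\leq(1+\zeta)|V|/2$ converts this into $|\scrV\cap\calO^c|^2\leq\delta(1+\zeta)|V|^2/8$, which is the claimed bound.

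I expect the main obstacle to be the careful coordination of the three inequalities on $\delta,\xi,\zeta$: the bound $\delta<(1-3\xi)/4$ must simultaneously enforce the trapping $1-2\delta>\delta$ in the first stage and exclude $h\leq 1/3$ in the second, while $\delta<(1-\zeta)/2$ is exactly what rules out $h=1$. Once the subgroup structure is in place the two averaging computations are essentially dictated by the setup, but arranging them so that all three numerical hypotheses enter in the right places is the delicate point.
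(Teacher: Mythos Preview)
Your argument is correct and follows essentially the same route as the paper: the same triangle-inequality closure argument for $H_\delta$, the same averaging identity $\sum_{\tau\in\calG}|\scrV\cap\tau(\scrV)|=t|\scrV|^2$ to pin down the index, and the same second sum over the nontrivial coset to bound $|\scrV_1||\scrV_2|$. The only slip is cosmetic: ``the larger of $\calO_1,\calO_2$'' should read ``the orbit $\calO_i$ with larger $|\scrV\cap\calO_i|$'', since the two orbits themselves have equal size.
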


\begin{proof}
Let us assume that for no $\tau \in \calG$, 
the integer $|\scrV \cap \tau(\scrV)|$ lies in 
$
(\delta |\scrV| , (1- \delta)|\scrV|)
$. 
Let $t$ denote the size of the stabilizer of some element of $V$ under $\calG$. 
If $\calG = H_\delta $, then using 
an estimate from \cite[p. 9]{VertexTra},
we would obtain 
$
t |\scrV|^2 
= \sum_{g\in \calG} |\scrV\cap g\scrV| 
\geq |\calG| |\scrV| (1-\delta)$, 
which yields 
$
|\calG|  (1-\delta)
\leq 
t |\scrV|
\leq
t \frac {1 + \zeta} 2 |V| 
= 
|\calG| \frac{1 + \zeta} 2
$, 
contradicting 
$\delta < \frac {1 - \zeta}2$. 
Hence $H_\delta $ is a proper subset of $\calG$. 
Note that $H_\delta $ contains the identity element of $\calG$. Let $\tau_1, \tau_2$ be elements of $H_\delta $. By the triangle inequality, 
\begin{align*}
|\scrV \setminus (\tau_1(\tau_2(\scrV)))| 
& \leq |\scrV \setminus (\tau_1(\scrV)) | + |(\tau_1(\scrV)) \setminus (\tau_1(\tau_2(\scrV)))| \\
& =  |\scrV \setminus  (\tau_1(\scrV)) | + |\scrV  \setminus (\tau_2(\scrV)) |\\
& = |\scrV | -  |\scrV \cap (\tau_1(\scrV))  | + |\scrV | - |\scrV  \cap (\tau_2(\scrV)) |\\
& \leq 2|\scrV| - 2(1- \delta) |\scrV| .
\end{align*}
Using $\delta < \frac {1-3\xi}4 < \frac 13$, we obtain 
$
|\scrV \cap (\tau_1(\tau_2(\scrV)))| 
 = |\scrV | - |\scrV \setminus (\tau_1(\tau_2(\scrV)))| 
 \geq (1 - 2\delta) |\scrV|
 > \delta |\scrV|,
$
which shows that $H_\delta $ contains $\tau_1\tau_2$. So $H_\delta $ is a proper subgroup of $\calG$. 
Note that 
$
t |\scrV|^2 
= \sum_{g\in \calG} |\scrV\cap g\scrV| 
\leq 
|\scrV| |H_\delta | + \delta |\scrV| (|\calG|- |H_\delta |)
$,
which yields 
$
\frac {1 - \xi} 2 |\calG| 
\leq
|H_\delta | +  \delta (|\calG|- |H_\delta |)$. 
This implies that 
$
\frac{|H_\delta |}{|\calG|} 
 \geq  \frac {\frac {1 - \xi} 2 - \delta }{1- \delta} 
 > \frac 13,
$
where the final inequality follows since
$\delta < \frac {1- 3\xi}4$. 
Hence $H_\delta $ is a subgroup of $\calG$ of index two. 

Since $\calG$ acts transitively on $V$ and no index two subgroup of $\calG$ acts transitively on $V$, it follows that there are precisely two orbits $\calO_1, \calO_2$ of the action of $H_\delta $ on $V$. Using an estimate from \cite[p. 10]{VertexTra}, it follows that 
$
|\scrV \cap \calO_1|
|\scrV \cap \calO_2|
= 
\frac 1 {2t} 
\sum_{g\in H_\delta ^c}
|\scrV \cap g^\mo \scrV|
\leq 
\frac 1 {2t}\delta
|H_\delta | 
 |\scrV|
= 
\frac \delta {4}
|V|
 |\scrV|$. 
So, one of the inequalities 
$|\scrV \cap \calO_1| 
\leq 
\sqrt 
{\frac{\delta} 4 |V| |\scrV|},
|\scrV \cap \calO_2| 
\leq 
\sqrt 
{\frac{\delta } 4 |V| |\scrV|}
$
holds.  Hence for some orbit $\calO$ of some element of $V$ under the action of $H_\delta $, 
we have 
$
|\scrV \cap \calO^c| 
\leq 
\sqrt 
{\frac{\delta } 4 |V| |\scrV|}
=
\sqrt {\frac{\delta (1+ \zeta) } 2 } \frac{|V|}2$. 
\end{proof}

\begin{lemma}
\label{Prop}
Let $V, \scrV, \calG, \delta, \xi, \zeta$ be as in \cref{Lemma:IndexTwoSubgroup}. In addition to the conditions of \cref{Lemma:IndexTwoSubgroup}, 
assume that $\mu, \kappa$ are real numbers satisfying the inequalities
\begin{equation}
\label{Eqn:XiZetaKappaBdd2}
\mu >0, \kappa\geq 0, 
\frac 1 {2\mu } - \xi - \kappa > 0, 
\delta 
< \frac 2 {1 + \zeta} 
\left(
\frac 1 {2\mu } - \xi - \kappa
\right)^2.
\end{equation}
Then the following hold. 
\begin{enumerate}
\item 

Let $\rho_1, \ldots, \rho_d: V \to V$ be permutations of $V$ such that $|\rho_i(\scrV) \cap \scrV|\leq \kappa |V|$ holds for any $1\leq i \leq d$. 
Suppose for any index two subgroup $H$ of $\calG$, and for any $u, v\in V$ lying in the same $H$-orbit with $u = \rho_i(v)$ for some $1\leq i \leq d$, the inequalities 
$
|\rho_i(Hv) \cap Hu | \geq \frac{|V|}{2\mu}
,
|\rho_i(H^c v) \cap H^c u | \geq \frac{|V|}{2\mu}
$
hold. Then 
$V$ admits a partition into disjoint subsets $V_1, V_2$ of equal size such that $V_j \cap (\cup_{i=1}^d \rho_i(V_j)) = \emptyset$ for any $j$. 
\item 
Let $\scrP(V)$ denote the power set of $V$, and $\calN : \scrP(V) \to \scrP(V)$ be a map such that 
$|\calN(\scrV) \cap \scrV | 
\leq \kappa |V|$ holds. Then there exists an orbit $\calO$ of $H_\delta$ such that for any map $f:\calO\to \calO$ with 
$$\text{
$f(X) \subseteq \calN(X)$ for any $X\subseteq \calO$,}$$
the inequality 
$\frac {|f(\calO)\cap \calO|}{|V|}< \frac 1{2\mu}
$
holds. 
\end{enumerate}
\end{lemma}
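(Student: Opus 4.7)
The plan is to use \cref{Lemma:IndexTwoSubgroup} to extract the index-two subgroup $H_\delta\leq\calG$ and an orbit $\calO$ (one of the two $H_\delta$-orbits on $V$, both of size $|V|/2$ by transitivity of $\calG$) on which $\scrV$ is concentrated, and then prove both parts by contradiction, showing that a violation would force $|\scrV\cap\rho_i(\scrV)|>\kappa|V|$ (respectively $|\calN(\scrV)\cap\scrV|>\kappa|V|$), contrary to the hypothesis.

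The calibration driving both parts is as follows. Set $\epsilon:=\xi+\sqrt{\delta(1+\zeta)/2}$. Combining the bound $|\scrV\cap\calO^c|\leq \sqrt{\delta(1+\zeta)/2}\,|V|/2$ from \cref{Lemma:IndexTwoSubgroup} with $|\scrV|\geq(1-\xi)|V|/2$ yields $|\calO\setminus\scrV|\leq\epsilon|V|/2$, and \eqref{Eqn:XiZetaKappaBdd2} is calibrated precisely so that $\tfrac{1}{2\mu}-\epsilon>\kappa$. This single strict inequality will close out both contradictions.

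For part (1), set $V_1:=\calO$ and $V_2:=\calO^c$. It suffices to show $\calO\cap\rho_i(\calO)=\emptyset$ for every $i$, since then $\rho_i$ being a bijection with $|\calO|=|\calO^c|$ forces $\rho_i(\calO)=\calO^c$ and hence $\calO^c\cap\rho_i(\calO^c)=\emptyset$ automatically. Suppose for contradiction $\calO\cap\rho_i(\calO)\neq\emptyset$ for some $i$; pick $u=\rho_i(v)$ with $u,v\in\calO$. Since $u,v$ lie in the common $H_\delta$-orbit $\calO$, the hypothesis forces $|\rho_i(\calO)\cap\calO|\geq|V|/(2\mu)$. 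Discarding the at most $\epsilon|V|/2$ elements of this set not in $\scrV$, and then the at most $\epsilon|V|/2$ elements whose $\rho_i^{-1}$-preimage (which again lies in $\calO$) is not in $\scrV$, leaves $|\scrV\cap\rho_i(\scrV)|\geq(\tfrac{1}{2\mu}-\epsilon)|V|>\kappa|V|$, contradicting the hypothesis.

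For part (2), take the same $\calO$. Suppose for contradiction that some $f:\calO\to\calO$ satisfying $f(X)\subseteq\calN(X)$ for every $X\subseteq\calO$ has $|f(\calO)|\geq|V|/(2\mu)$. Set $X_0:=\scrV\cap\calO$; the bound $|f(\calO\setminus X_0)|\leq|\calO\setminus X_0|\leq\epsilon|V|/2$ gives $|f(X_0)|\geq|V|/(2\mu)-\epsilon|V|/2$, and since $f(X_0)\subseteq\calO$, discarding the at most $\epsilon|V|/2$ elements of $f(X_0)$ outside $\scrV$ yields $|f(X_0)\cap\scrV|>\kappa|V|$. Combining $f(X_0)\subseteq\calN(X_0)$ with the monotonicity $\calN(X_0)\subseteq\calN(\scrV)$ (valid for $X_0\subseteq\scrV$ when $\calN$ is of the neighborhood-type operators arising in the applications) then produces $|\calN(\scrV)\cap\scrV|>\kappa|V|$, contradicting the hypothesis. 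I expect this final monotonicity step to be the only delicate point; the remainder is set-theoretic bookkeeping of the $\scrV$-versus-$\calO$ and element-versus-preimage discrepancies, calibrated exactly to \eqref{Eqn:XiZetaKappaBdd2}.
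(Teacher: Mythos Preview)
Your proposal is essentially the same argument as the paper's, run in the contrapositive direction: the paper fixes $\Phi$ with $|\Phi(\scrV)\cap\scrV|\leq\kappa|V|$ and deduces $|\Phi(\calO)\cap\calO|<|V|/(2\mu)$ via exactly the bound $|\calO\setminus\scrV|\leq\tfrac{\epsilon}{2}|V|$ and the calibration $\kappa+\epsilon<\tfrac1{2\mu}$ that you isolate, whereas you start from $|\rho_i(\calO)\cap\calO|\geq|V|/(2\mu)$ and derive $|\rho_i(\scrV)\cap\scrV|>\kappa|V|$. Your observation in part~(1) that $\calO^c\cap\rho_i(\calO^c)=\emptyset$ follows automatically from $\calO\cap\rho_i(\calO)=\emptyset$ (by bijectivity and $|\calO|=|\calO^c|$) is a genuine, if minor, simplification over the paper, which treats the two cases separately and thereby invokes the second hypothesis $|\rho_i(H^cv)\cap H^cu|\geq|V|/(2\mu)$; your route shows this half of the hypothesis is actually unnecessary.

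On the monotonicity point in part~(2): you are right to flag it, and in fact the paper's own one-line proof ``$|f(\scrV)\cap\scrV|\leq|\calN(\scrV)\cap\scrV|$'' implicitly uses $\calN(\scrV\cap\calO)\subseteq\calN(\scrV)$ in exactly the same way, since $f$ is only defined on $\calO$. So your proof is no less complete than the paper's here, and as you note, the applications only use the monotone neighborhood operator.
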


\begin{proof}
By \cref{Lemma:IndexTwoSubgroup}, there is an orbit $\calO$ of $H_\delta$ such that $
|\scrV \cap \calO^c| \leq \sqrt {\frac{\delta (1+ \zeta) } 2 } \frac{|V|}2$. 
Note that for any map $\Phi: \calO \to V$ and $|\Phi(\scrV) \cap \scrV|\leq \kappa |V|$, we have 
\begin{align*}
|\Phi(\calO) \cap \calO |
& \leq
|\Phi(\calO \cap \scrV ) \cap \calO |
+ 
|\Phi(\calO \cap \scrV^c ) \cap \calO|\\
& = 
|\Phi(\calO \cap \scrV ) \cap \calO \cap \scrV  |
+ |\Phi(\calO \cap \scrV ) \cap \calO \cap \scrV^c  |
+ 
|\Phi(\calO \cap \scrV^c ) \cap \calO |\\
& \leq
|\Phi(\scrV) \cap \scrV  |
+ 2 |\calO \cap \scrV^c  | \\
& = 
|\Phi(\scrV) \cap \scrV  |
+ 2 
(
|\calO| - |\scrV| + |\calO ^c \cap \scrV |
) \\
& \leq 
\kappa |V| 
+ 2\left( \frac{|V|}2 - 
\frac {1 - \xi} 2 |V|  + 
\sqrt {\frac{\delta (1+ \zeta) } 2 } \frac{|V|}2
\right) \\
& \leq 
\left(
\xi + \kappa  +
\sqrt 
{\frac{\delta (1 + \zeta) } 2 }\right) |V| \\
& < \frac{|V|}{2\mu}.
\end{align*}

Suppose the conditions in part (1) hold. 
If $\calO \cap (\cup_{j=1}^d \rho_j(\calO))$ is nonempty, then there are elements $u, v\in \calO$ such that $u = \rho_i(v)$ for some $i$, and hence, 
$
|\rho_i(\calO) \cap \calO|
 = |\rho_i(Hv) \cap H u| \geq \frac{|V|}{2\mu}
$, 
contradicting $|\rho_i(\calO) \cap \calO| < \frac{|V|}{2\mu}$. 
If $\calO^c \cap (\cup_{j=1}^d \rho_j(\calO^c))$ is nonempty, then there are elements $u, v\in \calO^c$ such that $u = \rho_i(v)$ for some $i$, and hence, 
$
|\rho_i(\calO) \cap \calO|
 = |\rho_i(H^c v) \cap H^c u| \geq \frac{|V|}{2\mu}
$, 
contradicting $|\rho_i(\calO) \cap \calO| < \frac{|V|}{2\mu}$. 
Consequently, the sets $\calO \cap (\cup_{j=1}^d \rho_j(\calO))$, $\calO^c \cap (\cup_{j=1}^d \rho_j(\calO^c))$ are empty.

In part (2), the inequality 
$|f(\scrV) \cap \scrV | 
\leq |\calN(\scrV) \cap \scrV | 
\leq \kappa |V|$ holds. Hence, 
the inequality 
$\frac {|f(\calO)\cap \calO|}{|V|}< \frac 1{2\mu}
$
follows. 
\end{proof}

\section{The bipartiteness constants}

\subsection{Notations}

Let $G$ be a finite group, and $S$ be a subset of $G$. The Cayley graph $C(G, S)$ (resp. $C_\Sigma(G, S)$ is defined as the graph having $G$ as its set of vertices and for two elements $x, y \in G$, the vertex $y$  is adjacent to $x$ if $y  =xs$ (resp. $y  = x^\mo s$) for some $s\in S$. Further, if  $\sigma$ is a group automorphism of $G$, then the twisted Cayley graph $C(G, S)^\sigma$ (resp. $C_\Sigma(G, S)^\sigma$ is defined as the graph having $G$ as its set of vertices and for two elements $x, y \in G$, the vertex $y$  is adjacent to $x$ if $y  =\sigma(xs)$ (resp. $y  = \sigma(x^\mo s)$) for some $s\in S$. 
Note that taking $\sigma$ as the identity map, we obtain $C(G, S)^\sigma = C(G, S)$ and $C_\Sigma(G, S)^\sigma = C_\Sigma(G, S)$. Thus, the class of twisted Cayley graphs (resp. twisted Cayley sum graphs) contains the class of Cayley graphs (resp. Cayley sum graphs). 
The twisted Cayley graphs and twisted Cayley sum graphs were studied in \cite{CheegerTwisted}. 
We remark that the notion of twisted Cayley graphs is related to the notion of generalised Cayley graphs as defined by Maru\v{s}i\v{c}, Scapellato and Zagaglia Salvi \cite{MarusicScapellatoZagagliaSalviGeneralizedCayleyGraph}.

Let $(V, E)$ be an undirected, $d$-regular graph with $|V|\geq 2$. 
For a subset $S$ of the vertex of $(V, E)$, its set of neighbours is denoted by $\calN(S)$. The edge boundary $\partial_\edge (S)$ of $S$,
and the vertex boundary $\partial_\ver  (S)$ of $S$ are 
defined as 
$$
\partial_\edge (S) 
= E(S, V \setminus S), \quad \partial_\ver (S) = \calN(S) \setminus S.
$$
The edge Cheeger constant $\ecc_{(V, E)}$ of $(V, E)$ is defined as 
$$\ecc_{(V, E)} = \min_{S\subseteq V, S\neq \emptyset, |S|\leq |V|/2}
\frac{|\partial_\edge (S)|}{d|S|}.$$
For disjoint subsets $L, R$ of $V$ with $L \cup R\neq \emptyset$, define 
$$
\beta_\edge (L, R) 
= \frac
{|E(L, R^c)|+ |E(R, L^c)|}
{d|L\cup R|}
= 
\frac
{|E(L,L)| + |E(R, R)| + |\partial_\edge  (L\cup R)|}
{d|L\cup R|}.
$$
As introduced by 
Trevisan \cite{TrevisanMaxCutTheSmallestEigenVal}, the edge bipartiteness constant $\beta_{\edge, (V, E)}$ of $(V, E)$  
is defined as 
$$\beta_{\edge, (V, E)}
=
\min_{L \subseteq V, R \subseteq V, L\cap R = \emptyset, 
L \cup R \neq \emptyset}
\beta_\edge (L, R).
$$
Motivated by the work of Trevisan \cite{TrevisanMaxCutTheSmallestEigenVal}, Moorman, Ralli and Tetali \cite{CayleyBottomBipartite} introduced the vertex bipartiteness constant. 
For disjoint subsets $L, R$ of $V$ with $L \cup R\neq \emptyset$, define 
$$
\beta_\ver(L, R)
=
\frac
{|L\cap \calN(L)| + |R \cap \calN(R)| + | \partial_\ver (L \cup R) |} 
{|L\cup R|}.
$$
The vertex bipartiteness constant $\beta_{\ver, (V, E)}$ of $(V, E)$ is defined by 
$$
\beta_{\ver, (V, E)}
= 
\min_{L \subseteq V, R \subseteq V, L\cap R = \emptyset, 
L \cup R \neq \emptyset}
\beta_\ver (L, R).
$$

In the following, $(V, E)$ denotes a finite, undirected graph (which may contain multiple edges, and even multiple loops at certain vertices) of degree $d$. 
Let $\theta_1, \ldots, \theta_d: V\to V$ be permutations such that the vertices $v, \theta_i(v)$ are adjacent in $(V,  E)$ for any $v\in V$ and $1\leq i \leq d$, and that $ \calN(v)$ is equal to $\cup_{i=1}^d \{\theta_i(v)\}$ for any $v\in V$. 
The Birkhoff-von Neumann theorem \cite[Theorem 5.5]{vanLintWilsonCourseCombi} guarantees the  existence of such permutations. 
Assume that $|V|\geq 2$, and a finite group $\calG$ acts transitively on $V$ and no index two subgroup of $\calG$ acts transitively on the set $V$.

\subsection{Bounding the edge bipartiteness constant}

Let $\ecc$ denote the edge Cheeger constant of $(V, E)$, and $\beta_\edge$ denote the edge bipartiteness constant of $(V, E)$. 

\begin{theorem}
\label{Thm:betaEdgeEcc}
Assume that 
the graph $(V, E)$ is non-bipartite.
Also assume that for any $g\in \calG$, there exists a permutation $\varsigma_g: V \to V$ such that for any $A, B\subseteq V$, the sets $E(A, B), E(g(A), \varsigma_g (B))$ are of the same size. 
\begin{enumerate}
\item 
 If $\calG$ acts on $V$ through automorphisms of $(V, E)$, then 
$$
\ecc \leq 
\begin{cases}
40 \beta_\edge & \text{ if } d\beta_\edge < \frac  1{40}, \\
40 d\beta_\edge & \text{ if } d\beta_\edge \geq \frac  1{40}.
\end{cases}
$$
\item 
If for each $\theta_i, 1\leq i\leq d$ and $v\in V$, there is an automorphism or an anti-automorphism $\psi_{i,v}$ of the group $\calG$ such that 
$\theta_i(g\cdot v) =  \psi_{i,v}(g) \cdot \theta_i(v)$
holds for any $g\in \calG$, then 
$$
\ecc \leq 
\begin{cases}
90 \beta_\edge & \text{ if } d\beta_\edge < \frac  1{90}, \\
90 d\beta_\edge & \text{ if } d\beta_\edge \geq \frac  1{90}.
\end{cases}
$$
Moreover, if $\psi_{i, v}$ fixes any index two subgroup of $\calG$ for any $1\leq i \leq d$ and $v\in V$, then 
$$
\ecc \leq 
\begin{cases}
40 \beta_\edge & \text{ if } d\beta_\edge < \frac  1{40}, \\
40 d\beta_\edge & \text{ if } d\beta_\edge \geq \frac  1{40}.
\end{cases}
$$
\end{enumerate}
Let $\Gamma$ be a Cayley graph, or a Cayley sum graph, or a twisted Cayley graph, or a twisted Cayley sum graph, or a vertex-transitive graph. Denote the degree of $\Gamma$ by $d$. Assume that $\Gamma$ is undirected, non-bipartite. Then 
the inequality  
$
\beta_{\edge, \Gamma }
\geq
\frac
{\ecc_\Gamma}
{90 d}
$
holds. 
\end{theorem}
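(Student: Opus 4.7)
The plan is to deduce the final inequality $\beta_{\edge, \Gamma} \geq \ecc_\Gamma/(90d)$ from parts (1) and (2) of the theorem by matching each of the five graph families to the appropriate part: vertex-transitive graphs use part (1) with $\calG = \mathrm{Aut}(\Gamma)$; Cayley graphs use the ``moreover'' clause of part (2) with $\psi_{i,v} = \mathrm{id}$; Cayley sum graphs use the same clause with $\psi_{i,v}(h) = h^{-1}$ (an anti-automorphism that fixes every subgroup of $\calG$ setwise); and the twisted Cayley and twisted Cayley sum graphs use $\psi_{i,v}$ built from $\sigma$, for which in general only the main clause of part (2) is guaranteed. Non-bipartiteness of $\Gamma$ in each case translates into the input condition of \cref{Lemma:IndexTwoSubgroup} that no index-two subgroup of $\calG$ acts transitively on $V$. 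The worst constant among the five cases is 90.

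For the proof of parts (1) and (2), assume $d\beta_\edge < 1/c$ with $c\in\{40,90\}$ (otherwise $\ecc \leq 1 \leq c d \beta_\edge$ trivially) and let $(L, R)$ achieve $\beta_\edge$. If $|L\cup R| \leq |V|/2$, then
$$
\ecc \leq \frac{|\partial_\edge(L\cup R)|}{d|L\cup R|} \leq \beta_\edge(L,R) = \beta_\edge,
$$
and if $|L\cup R| > |V|/2$ but $|V \setminus (L\cup R)|$ is at least a constant fraction of $|V|$, applying the same definition to $V \setminus (L\cup R)$ gives $\ecc = O(\beta_\edge)$ as well. The substantive subcase is $|L \cup R| \approx |V|$, in which the identity $|E(L,R)| = d|L| - 2|E(L,L)| - |E(L, V \setminus(L\cup R))|$ together with its analogue for $R$ forces $\bigl||L|-|R|\bigr| \leq 2\beta_\edge|V|$, so $|L|$ and $|R|$ are each within $O(\beta_\edge|V|)$ of $|V|/2$.

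In this subcase I would set $\scrV := L$ and apply \cref{Lemma:IndexTwoSubgroup} with $\xi, \zeta$ just large enough for $|L|/|V| \in [(1-\xi)/2, (1+\zeta)/2]$ and with $\delta$ of order $(d\beta_\edge)^2$ so that \eqref{Eqn:XiZetaKappaBdd1} and \eqref{Eqn:XiZetaKappaBdd2} are simultaneously satisfied. The non-intermediate-overlap hypothesis would be verified using the permutations $\varsigma_g$: the identity $|E(A,B)| = |E(g(A), \varsigma_g(B))|$ reinterprets $|L\cap g(L)|$ as a count of same-side edges in a translated near-bipartition, which the smallness of $\beta_\edge$ forces to concentrate near $0$ or near $|L|$. \cref{Lemma:IndexTwoSubgroup} then yields an index-two subgroup $H\leq \calG$ and an $H$-orbit $\calO$ with $|L\bigtriangleup\calO| = O(\sqrt{\delta})|V|$. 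Non-bipartiteness of $\Gamma$ rules out the bipartition conclusion of \cref{Prop} part (1), and the quantitative bound of part (2) (or the more refined internal version $|\Phi(\calO) \cap \calO| < |V|/(2\mu)$ from its proof, applied with $\Phi = \theta_i$) controls the internal edge count $|E(\calO, \calO)|$ by $O(d\beta_\edge|V|)$. Combining this with the symmetric-difference estimate, I would assemble from $\calO$, $L$, and their symmetric difference a Cheeger candidate $S$ of size at most $|V|/2$ with $|\partial_\edge S| = O(d\beta_\edge|V|)$, yielding $\ecc \leq c\beta_\edge$.

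The main obstacle I foresee is precisely the construction of $S$: the orbit $\calO$ alone is not a useful Cheeger cut because in a near-bipartite graph $\calO$ looks like one side of a bipartition and has edge boundary close to $d|\calO|$. The set $S$ must instead be built out of the small symmetric difference $L \bigtriangleup \calO$ together with $V \setminus (L\cup R)$, balancing the three error terms of order $\sqrt{\delta}|V|$, $d\beta_\edge|V|$, and $|V\setminus(L\cup R)|$ simultaneously so that both $|S| \leq |V|/2$ and $|\partial_\edge S|/(d|S|) \leq c \beta_\edge$ hold. A secondary obstacle is the anti-automorphism case of part (2) with a non-trivial twist $\sigma$: if $\psi_{i,v}$ swaps the two cosets of $H$, the trapping must be re-run separately with $\scrV := R$ and the two instances combined, and this doubling is what inflates the constant from 40 to 90 outside the ``moreover'' clause.
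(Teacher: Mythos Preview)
Your proposal has the central logic reversed. You try to \emph{verify} the no-intermediate-overlap hypothesis of \cref{Lemma:IndexTwoSubgroup} and then build a Cheeger candidate from the resulting orbit $\calO$. The paper does the opposite: it shows that \emph{if} no $\tau \in \calG$ gave $|L \cap \tau(L)| \in (\delta|L|, (1-\delta)|L|)$, then \cref{Prop} would force a bipartition of $(V,E)$, contradicting the hypothesis. Hence some $\tau$ with intermediate overlap \emph{exists}, and the Cheeger candidates are built from this $\tau$: they are $(L \cap \tau(L)) \cup (R \cap \varsigma_\tau(R))$ and $(L \cap \tau(R)) \cup (R \cap \varsigma_\tau(L))$. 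This is where the permutation $\varsigma_\tau$ is actually used --- the identity $|E(\tau(A), \varsigma_\tau(B))| = |E(A,B)|$ bounds the edge boundary of each candidate by $2(|E(L,R^c)| + |E(R,L^c)|) \leq 2d\beta_\edge|V|$, while the intermediate size of $|L \cap \tau(L)|$ ensures both candidates have size at least $2\varepsilon|V|$ and disjointness ensures one has size at most $|V|/2$. Your invocation of $\varsigma_g$ to show vertex overlaps ``concentrate near $0$ or near $|L|$'' does not work: that hypothesis controls edge counts, not $|L \cap g(L)|$.

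Your alternative candidate assembled from $L \bigtriangleup \calO$ lives in the branch the paper discards by contradiction, and even there the bound available on $|\theta_i(\calO) \cap \calO|$ from the proof of \cref{Prop} is of order $\varepsilon|V|$, not $d\beta_\edge|V|$, so it would not yield $\ecc \leq c\beta_\edge$. A few smaller corrections: in the paper $\delta = 8\varepsilon/(1-2\varepsilon)$ is a fixed constant, not of order $(d\beta_\edge)^2$; the condition that no index-two subgroup of $\calG$ acts transitively is a structural hypothesis on $\calG$ (automatic for the regular action of $G$ on itself), not a translation of non-bipartiteness; and the jump from $40$ to $90$ arises from the weaker orbit-intersection bound $|V|/4$ versus $|V|/2$ in \cref{Prop} when $\psi_{i,v}$ may fail to fix an index-two subgroup, not from re-running the trapping with $\scrV = R$.
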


\begin{proof}
We set $\mu=1$ in part (1). In part (2), we set $\mu = 2$, and if $\psi_{i,v}$ fixes any index two subgroup of $\calG$, then we set $\mu = 1$. 
Note that 
$\frac{8\varepsilon}{1 - 2\varepsilon} < \min\left\{ 
\frac {1 - \varepsilon}2, 
\frac 14 - \frac {3\varepsilon}{2} , 
\frac {2\left( \frac 1{2\mu} - 3\varepsilon\right)^2 }{ 1 +\varepsilon} 
\right\}$
holds for $\varepsilon = \frac 1{40}$ (resp. $\varepsilon = \frac 1{90}$) if $\mu = 1$ (resp. $\mu = 2$). 

Let $L, R$ denote disjoint subsets of $V$ such that 
$\beta_\edge = \beta_\edge(L,R)$. 
We assume that $|L| \geq |R|$. 
Let $Y$ denote the subset 
$V\setminus (L \cup R \cup \calN(L\cup R))$ of $V$. 
If $|Y| \geq \frac{|V|}2$, then $|L \cup R| \leq \frac{|V|}{2}$, 
and hence 
$
\ecc  \varepsilon \leq \ecc \leq \frac{|\partial_\edge (L\cup R)|}{d|L \cup R|} \leq \beta_\edge$. 
If $\varepsilon |V| \leq |Y| < \frac{|V|}2$, then 
\begin{align*}
\ecc 
& \leq \frac{|\partial_\edge (Y)|}{d|Y|} \\
& = \frac{\langle T1_Y, 1_{V\setminus Y} \rangle}{d|Y|} \\
& = \frac{\langle T1_Y, 1_{\partial_\ver (Y)} \rangle}{d|Y|}  \\
& = \frac{\langle T1_{\partial_\ver (Y)}, 1_Y \rangle}{d|Y|}  \\
& = \frac{\langle T1_{\partial_\ver (L\cup R)}, 1_{Y\cup L \cup R} \rangle}{d|Y|}  \\
& =  \frac{|\partial_\edge  (L \cup R)|}{d|Y|} \\
& \leq \frac{d\beta_\edge |L \cup R|}{d\varepsilon |V|} \\
& \leq \frac{\beta_\edge}\varepsilon.
\end{align*}
Moreover, if $|L| \geq \frac {1 + \varepsilon}2 |V|$, then using
$
|L|
-
|L\cap \calN(L)|
=|L\setminus \calN(L)| = 
|L \cap (\cup \theta_i(L))^c |
=
|L \cap (\cap \theta_i(L^c))|
\leq 
|\theta_i(L^c)|
= |L^c|$, we obtain 
\begin{align*}
\beta_\edge 
& \geq \frac{|E(L,L)|}{d|L\cup R|}\\
& = \frac{\langle T1_L, 1_L\rangle }{d|L\cup R|}\\
& = \frac{\sum_{i=1}^d\langle P_{\theta_i} 1_L, 1_L\rangle }{d|L\cup R|}\\
& = \frac{\sum_{i=1}^d |\theta_i (L)\cap L| }{d|L\cup R|}\\
& = \frac{\sum_{i=1}^d (|L| - |L \cap \theta_i (L^c)|) }{d|L\cup R|}\\
& \geq \frac{\sum_{i=1}^d (|L| - |\theta_i (L^c)| )}{d|L\cup R|}\\
& = \frac{\sum_{i=1}^d (|L| - |L^c|) }{d|L\cup R|}\\
& = \frac{|L| - |L^c| }{|L\cup R|}\\
& \geq \varepsilon \\
& \geq \ecc \varepsilon.
\end{align*}
Further, 
if $d \beta_\edge \geq \varepsilon$ holds, we have 
$\ecc  \leq 1
\leq \frac{d\beta_\edge}\varepsilon$. 
Henceforth, we assume that 
$$|Y| < \varepsilon |V|,
|L| < \frac {1 + \varepsilon} 2|V|,
d \beta_\edge < \varepsilon.$$
Note that 
\begin{align*}
|V \setminus (L \cup R) | 
 = |\partial_\ver (L \cup R)| + |Y| 
 \leq   |\partial_\edge  (L \cup R)| + |Y| 
 \leq d\beta_\edge |L\cup R| + |Y|
 < (d\beta_\edge + \varepsilon)|V|.
\end{align*}
Using $|L|\geq |R|$, we obtain 
$|L| 
> 
\frac{1 - d\beta_\edge - \varepsilon}2|V|
> 
\frac{1 - 2\varepsilon}2|V| $.
Also note that 
$
|\theta_i(L)\cap L|
\leq 
|\calN(L) \cap L|
\leq 
|E(L, L)|
\leq d\beta_\edge |V|$. 
Set $\scrV = L, \xi = 2\varepsilon , 
\zeta = \varepsilon,
\kappa = d\beta_\edge, \delta = \frac{8\varepsilon}{1 - 2\varepsilon}$. 
Note that 
$\frac 1 {2\mu } - \xi - \kappa >0,
\delta > 0, 
\delta < \frac {1 - \zeta}2, 
\delta < \frac {1- 3\xi}4, 
\delta 
< \frac 2 {1 + \zeta} 
(
\frac 1 {2\mu } - \xi - \kappa
)^2
$
holds, i.e., Equations \eqref{Eqn:XiZetaKappaBdd1}, \eqref{Eqn:XiZetaKappaBdd2} hold.

Suppose the condition of part (1) holds. 
Let us assume that there exists an index two subgroup $H$ of $\calG$ such that for some orbit $\calO$ of $H$ and for any map $f:\calO\to \calO$ with 
$$\text{
$f(X) \subseteq \calN(X)$ for any $X\subseteq \calO$,}$$
the inequality 
$\frac {|f(\calO)\cap \calO|}{|V|}< \frac 1{2}
$
holds. 
Let $T$ denote the adjacency operator of $(V, E)$. 
Since $(V, E)$ is not bipartite, there are elements $u, v$ of $\calO^\dag$ such that $\langle T1_v, 1_u\rangle\neq 0 $ where $\calO^\dag$ is one of $\calO, \calO^c = V \setminus \calO$. Let $H^\dag$ denote $H$ (resp. $H^c = \calG \setminus H$) if $\calO^\dag  = \calO$ (resp. $\calO^\dag = \calO^c$). Note that $\calO = H^\dag u = H^\dag v$. 
Since $\calG$ acts on $V$ through automorphisms of $(V, E)$, 
for any $g\in H$, it follows that 
$\langle T1_{g(v)}, 1_{g(u)} \rangle =\langle T1_v, 1_u\rangle\neq 0 $, which implies that $\calO^\dag \subseteq \calN(\calO^\dag)$, and hence, $\calO \subseteq \calN(\calO)$. Note that for any $x, y\in \calO$, there is an element $h\in H$ such that $y = hx$, and hence $
\langle T1_\calO, 1_x \rangle 
= \langle T1_x, 1_\calO\rangle 
= \langle T P_h 1_x, P_h 1_\calO\rangle 
= \langle T 1_y, 1_\calO\rangle 
=\langle T1_\calO, 1_y \rangle 
$. Hence, by the Birkhoff-von Neumann theorem \cite[Theorem 5.5]{vanLintWilsonCourseCombi}, there exist permutations $\varrho_1, \ldots, \varrho_m$ of $\calO$ such that 
$T|_{\calO} = P_{\varrho_1} + \cdots + P_{\varrho_m}$, where $T|_{\calO}$ denotes the adjacency operator of the induced subgraph on $\calO$, i.e., the composite map $\ell^2(\calO) \to \ell^2(V) \xra{T}\ell^2(V) \to \ell^2(\calO)$, where the first map is the extension-by-zero map, and the final map is the orthogonal projection.
Taking $f=\varrho_1$, we obtain 
$|f(\calO)\cap \calO| =
|\varrho_1(\calO)\cap \calO|
= |\calO| = \frac{|V|}2$, which contradicts $|f(\calO)\cap \calO|< \frac{|V|}2$. 
Applying \cref{Prop}(2), it follows that 
$
|L \cap \tau(L)| 
\in 
(\delta |L| , (1- \delta)|L|)
$
holds for some $\tau \in \calG$. 

Suppose the condition of part (2) holds. 
Let $t$ denote the size of the stabilizer of some element of $V$ under $\calG$. Let $H$ be an index two subgroup of  $\calG$ and $u,v$ be elements of $V$ lying in the same $H$-orbit with $u = \theta_i(v)$ for some $1\leq i \leq d$. Then for $H^\dag = H, H^c$, we have 
\begin{align*}
|\theta_i(H^\dag v) \cap H^\dag u|
& = |\psi_{i,v} (H^\dag) \theta_i(v) \cap H^\dag \theta_i(v)|\\
& \geq |(\psi_{i,v} (H^\dag) \cap H^\dag) \theta_i(v)| \\
& \geq \frac{|\psi_{i,v} (H^\dag) \cap H^\dag|}{t} \\
& \geq \frac{|H|}{2t} \\
&= \frac{|V|}{4}
\end{align*}
and if $\psi_{i,v}$ fixes $H$, then 
$ |\theta_i(H^\dag v) \cap H^\dag u| \geq \frac{|V|}{2}$ holds. Since $(V, E)$ is non-bipartite, it follows from  \cref{Prop}(1) that 
$
|L \cap \tau(L)| 
\in 
(\delta |L| , (1- \delta)|L|)
$
holds for some $\tau \in \calG$.

For any subsets $A, B, C, D$ of $V$, note that 
\begin{align*}
&
((A\cap C) \cup (B \cap D))
\times  
((A\cap C) \cup (B \cap D))^c
\\
& =
((A\cap C) \cup (B \cap D))
\times  
((A^c\cup C^c) \cap (B^c \cup D^c))
\\
& =
\bigl(
(A\cap C) 
\times  
((A^c\cup C^c) \cap (B^c \cup D^c))
\bigr)
\sqcup 
\bigl(
(B \cap D)
\times  
((A^c\cup C^c) \cap (B^c \cup D^c))
\bigr)
\\
& \subseteq
\bigl(
(A\cap C) 
\times  
(B^c \cup D^c)
\bigr)
\sqcup 
\bigl(
(B \cap D)
\times  
(A^c\cup C^c)
\bigr)
\\
& =
\bigl(
((A\cap C) 
\times  
B^c)
\cup 
((A\cap C) 
\times  
D^c)
\bigr)
\sqcup 
\bigl(
((B \cap D)
\times  
A^c)
\cup 
((B \cap D)
\times  
C^c)
\bigr)
\\
& \subseteq 
\bigl(
(A
\times  
B^c)
\cup 
(C
\times  
D^c)
\bigr)
\cup 
\bigl(
(B
\times  
A^c)
\cup 
(D
\times  
C^c)
\bigr)\\
& \subseteq 
\bigl(
(A
\times  
B^c)
\cup 
(B
\times  
A^c)
\bigr)
\cup 
\bigl(
(C
\times  
D^c)
\cup 
(D
\times  
C^c)
\bigr)
\end{align*}
holds. 
It follows that 
\begin{align*}
& |E(
(L\cap \tau(L) ) \cup (R \cap \varsigma_\tau (R))
, 
((L\cap \tau(L) ) \cup (R \cap \varsigma_\tau (R)))^c
) |\\
& \leq 
|E(L, R^c) | + |E(R, L^c ) | + |E( \tau(L) , \varsigma_\tau (R)^c) |+|E(\varsigma_\tau (R),  \tau(L) ^c ) |\\
& = 
|E(L, R^c) | + |E(R, L^c ) | + |E( L, R^c) |+|E(R, L^c ) |,\\
& |E(
(L\cap \tau(R)) \cup (R \cap \varsigma_\tau (L))
, 
((L\cap \tau(R)) \cup (R \cap \varsigma_\tau (L)))^c
) |\\
& \leq 
|E(L, R^c) | + |E(R, L^c ) | + |E( \tau(R), \varsigma_\tau (L)^c) |+|E(\varsigma_\tau (L),  \tau(R)^c ) |\\
& = 
|E(L, R^c) | + |E(R, L^c ) | + |E(R, L^c) |+|E(L, R^c ) |.
\end{align*}
Since the sets 
$
(L\cap \tau(L) ) \cup (R \cap \varsigma_\tau (R))
,
(L\cap \tau(R)) \cup (R \cap \varsigma_\tau (L))
$ 
are disjoint, one of them has size at most $|V|/2$, and hence, 
\begin{align*}
&
\ecc
\min 
\{
d |(L\cap \tau(L) ) \cup (R \cap \varsigma_\tau (R))|, 
d |(L\cap \tau(R)) \cup (R \cap \varsigma_\tau (L))|
\} \\
& \leq 
|E(L, R^c) | + |E(R, L^c ) | + |E(R, L^c) |+|E(L, R^c ) | \\
& \leq 2 d\beta_\edge |L\cup R|\\
& \leq 2 d\beta_\edge |V|.
\end{align*}
Note that 
$$
|(L\cap \tau(L) ) \cup (R \cap \varsigma_\tau (R)) |
\geq 
|\tau(L) \cap L| 
\geq 
\delta |L| 
\geq 
\frac{\delta (1 - d\beta_\edge - \varepsilon)}2|V|
> \frac{\delta (1 - 2\varepsilon)}2 |V|
\geq 4\varepsilon |V| .$$
By the triangle inequality, we obtain 
$
|L \setminus \tau(L) | 
\leq 
|L \setminus (\tau(R))^c| + |(\tau(R))^c \setminus \tau(L)| $,
which yields 
\begin{align*}
|(L\cap \tau(R)) \cup (R \cap \varsigma_\tau (L))|
& \geq 
|L \cap \tau(R) |\\
& \geq 
|L \setminus \tau(L) | 
- 
|(\tau(R))^c \setminus \tau(L)| \\
& = 
|L \setminus \tau(L) | 
- 
|R^c \setminus L|\\
& = 
|L \setminus \tau(L) | 
- 
|V \setminus (L \cup R)|\\
& \geq \delta |L| - 
|V \setminus (L \cup R)|\\
& \geq \left(4\varepsilon - (d\beta_\edge + \varepsilon) \right)|V| \\
& \geq 2\varepsilon |V|.
\end{align*}
Consequently, we obtain  $2\ecc d\varepsilon |V| \leq 2d\beta_\edge |V|$, or equivalently, $\varepsilon \ecc \leq \beta_\edge$. This completes the proofs of the bounds stated in parts (1), (2). 

To prove the last part, we take 
$$
\varsigma_g 
= 
\begin{cases}
\text{the left multiplication map by } g  & \text{ if } \Gamma = C(G, S), \\
\text{the right multiplication map by }g^\mo & \text{ if } \Gamma = C_\Sigma(G, S),\\
\text{the left multiplication map by }\sigma(g)  & \text{ if } \Gamma = C(G, S)^\sigma, \\
\text{the right multiplication map by }\sigma^\mo (g^\mo) & \text{ if } \Gamma= C_\Sigma(G, S)^\sigma, \\
\text{the left multiplication map by } g  & \text{ if } \Gamma \text{ is a vertex-transitive graph}.
\end{cases}
$$
We claim that $\varsigma_g$ has the property that for any $A, B\subseteq V$, the sets $E(A, B), E(g(A), \varsigma_g (B))$ are of the same size. 
Indeed, if $\Gamma$ is a vertex-transitive  graph, then the sets $E(A, B), E(g(A), g (B))$ are of the same size for any $A, B\subseteq G$. If $\Gamma = C(G, S)^\sigma$, and if $b$ is adjacent to $a$ with $a, b\in G$, then $b = \sigma(as)$ for some $s\in S$, and hence for any $g\in G$, we have $\sigma(g)b = \sigma(gas)$, which  implies that $\sigma(g)b$ is adjacent to $ga$, and consequently, the sets $E(A, B), E(gA, B \sigma(g))$ are of the same size for any $A, B\subseteq G$. 
Further, if $\Gamma = C_\Sigma (G, S)^\sigma$, and if $b$ is adjacent to $a$ with $a, b\in G$, then $b = \sigma(a^\mo s)$ for some $s\in S$, and hence for any $g\in G$, we have $b \sigma^\mo(g^\mo) 
= \sigma(a^\mo s)\sigma^\mo(g^\mo)
= \sigma((ga)^\mo g s)\sigma^\mo(g^\mo)
= \sigma((ga)^\mo g s \sigma^\mo (\sigma^\mo(g^\mo)))
$. Since the graph $C_\Sigma (G, S)^\sigma$ is undirected, it follows that $S$ contains the element $g s \sigma^\mo (\sigma^\mo(g^\mo))$, which  implies that $b \sigma^\mo(g^\mo) $ is adjacent to $ga$, and consequently, the sets $E(A, B), E(gA,  B\sigma^\mo(g^\mo) )$ are of the same size for any $A, B\subseteq G$. This proves the claim. 

If $\Gamma$ is a vertex-transitive graph, then the condition of part (1) holds. 
Note that the vertex set of a twisted Cayley graph, and that  of a twisted Cayley sum graph carries an action of the underlying group via left multiplication. This action is transitive and no index two subgroup of the group acts transitively on the vertex set. 
If $\Gamma = C(G, S)^\sigma$, then writing $S = \{s_1, \ldots, s_d\}$, and taking $\theta_i(v) = \sigma(vs_i)$ and $\psi_{i,v}(g) = \sigma(g)$, it follows that 
$\theta_i(gv) = \sigma(gvs_i) 
= \sigma(g) \sigma(vs_i) = \psi_{i,v}(g) \theta_i(v)$. Further, if $\Gamma = C_\Sigma(G, S)^\sigma$, then writing $S = \{s_1, \ldots, s_d\}$, and taking $\theta_i(v) = \sigma(v^\mo s_i)$ and $\psi_{i,v}(g) = \sigma(v^\mo g^\mo v )$, it follows that 
$\theta_i(gv) = \sigma((gv)^\mo s_i) 
= \sigma(v^\mo g^\mo v ) \sigma(v^\mo s_i) = \psi_{i,v}(g ) \theta_i(v)$. So, if $\Gamma$ is a twisted Cayley graph or a twisted Cayley sum graph, then the condition of part (2) holds. 
Consequently, 
the inequality  
$
\beta_{\edge, \Gamma }
\geq
\frac
{\ecc_\Gamma}
{90 d}
$
holds if $\Gamma$ is a Cayley graph, or a Cayley sum graph, or a twisted Cayley graph, or a twisted Cayley sum graph, or a vertex-transitive graph. 
\end{proof}

\subsection{Bounding the vertex bipartiteness constant}

Let $\vcc$ denote the vertex Cheeger constant of $(V, E)$, and $\beta_\ver$ denote the vertex bipartiteness constant of $(V, E)$. 

\begin{theorem}
\label{Thm:betaEdgeVcc}
Assume that 
the graph $(V, E)$ is non-bipartite.
Also assume that for any $g\in \calG$, there exists a permutation $\varsigma_g: V \to V$ such that for any $A, B\subseteq V$, the sets $E(A, B), E(g(A), \varsigma_g (B))$ are of the same size. 
\begin{enumerate}
\item 
 If $\calG$ acts on $V$ through automorphisms of $(V, E)$, then 
$$
\vcc \leq 60 \beta_\ver.
$$
\item 
If for each $\theta_i, 1\leq i\leq d$ and $v\in V$, there is an automorphism or an anti-automorphism $\psi_{i,v}$ of the group $\calG$ such that 
$\theta_i(g\cdot v) =  \psi_{i,v}(g) \cdot \theta_i(v)$
holds for any $g\in \calG$, then 
$$
\vcc \leq 
135 \beta_\ver.
$$
Moreover, if $\psi_{i, v}$ fixes any index two subgroup of $\calG$ for any $1\leq i \leq d$ and $v\in V$, then 
$$
\vcc \leq 60 \beta_\ver.
$$
\end{enumerate}
Let $\Gamma$ be a Cayley graph, or a Cayley sum graph, or a twisted Cayley graph, or a twisted Cayley sum graph, or a vertex-transitive graph. Denote the degree of $\Gamma$ by $d$. Assume that $\Gamma$ is undirected, non-bipartite. Then 
the inequality  
$
\beta_{\ver, \Gamma }
\geq
\frac
{\vcc_\Gamma}
{135 }
$
holds. 
\end{theorem}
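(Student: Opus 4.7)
The plan is to follow the architecture of \cref{Thm:betaEdgeEcc}, replacing edge boundaries by vertex boundaries throughout. I pick disjoint $L, R \subseteq V$ realising $\beta_\ver = \beta_\ver(L, R)$ with $|L| \geq |R|$, set $Y = V \setminus (L \cup R \cup \calN(L \cup R))$, and fix $\varepsilon = 1/60$ when $\mu = 1$ and $\varepsilon = 1/135$ when $\mu = 2$; these choices ensure that $\frac{8\varepsilon}{1 - 2\varepsilon} < \min\{\frac{1-\varepsilon}{2}, \frac{1 - 6\varepsilon}{4}, \frac{2(1/(2\mu) - 3\varepsilon)^2}{1+\varepsilon}\}$, just as in the edge proof. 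First I dispose of the easy sub-cases without invoking the group action: if $|Y| \geq |V|/2$ then $|L \cup R| \leq |V|/2$ and $\vcc \leq |\partial_\ver(L \cup R)|/|L \cup R| \leq \beta_\ver$; if $\varepsilon |V| \leq |Y| < |V|/2$ then every neighbour of a vertex of $Y$ lies in $Y \cup \partial_\ver(L \cup R)$, so $\partial_\ver(Y) \subseteq \partial_\ver(L \cup R)$, which yields $\vcc \leq \beta_\ver/\varepsilon$; if $|L| \geq (1+\varepsilon)|V|/2$, the inclusion $L \setminus \calN(L) \subseteq \theta_i(L^c)$ forces $|L \cap \calN(L)| \geq |L| - |L^c| \geq \varepsilon |V|$, whence $\beta_\ver \geq \varepsilon \geq \varepsilon \vcc$. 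The residual case is $|Y| < \varepsilon |V|$, $(1 - 2\varepsilon)|V|/2 < |L| < (1+\varepsilon)|V|/2$, and $\beta_\ver < \varepsilon$, in which $|\theta_i(L) \cap L| \leq |L \cap \calN(L)| \leq \beta_\ver |V|$ holds for every $i$.

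I then apply \cref{Prop} with $\scrV = L$, $\xi = 2\varepsilon$, $\zeta = \varepsilon$, $\kappa = \beta_\ver$, and $\delta = 8\varepsilon/(1 - 2\varepsilon)$; inequalities \eqref{Eqn:XiZetaKappaBdd1} and \eqref{Eqn:XiZetaKappaBdd2} hold by the choice of $\varepsilon$. Under the hypothesis of part (1) (action through graph automorphisms) or of part (2) (intertwining identity $\theta_i(g v) = \psi_{i,v}(g) \theta_i(v)$), the non-bipartiteness of $(V, E)$ rules out the conclusion of \cref{Prop}(1) by the same argument as in the edge proof; hence \cref{Prop}(2) produces some $\tau \in \calG$ with $|L \cap \tau(L)| \in (\delta |L|, (1 - \delta) |L|)$. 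I form the disjoint sets $U_1 = (L \cap \tau(L)) \cup (R \cap \varsigma_\tau(R))$ and $U_2 = (L \cap \tau(R)) \cup (R \cap \varsigma_\tau(L))$; the triangle-inequality estimate of the edge proof, together with $|V \setminus (L \cup R)| \leq (\beta_\ver + \varepsilon)|V|$, gives each $U_j$ size at least $2\varepsilon |V|$, while the smaller of the two has size at most $|V|/2$.

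The main technical obstacle is to upper-bound $|\partial_\ver(U_j)|$ by a constant multiple of $\beta_\ver |L \cup R|$, which does not follow from a clean analogue of the edge-product inclusion employed in \cref{Thm:betaEdgeEcc}. My plan is to enumerate each $w \in \partial_\ver(U_j)$ according to which of the two blocks of $U_j$ contains a neighbour of $w$ and to which of the sets $L$, $R$, $(L \cup R)^c$ or their $\tau$- and $\varsigma_\tau$-translates contains $w$ itself, and then to attribute every resulting piece to one of the six defect sets $L \cap \calN(L)$, $R \cap \calN(R)$, $\partial_\ver(L \cup R)$ together with their $\tau$- and $\varsigma_\tau$-translates. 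The hypothesis on $\varsigma_g$ is the tool I will use to identify the cardinalities of the translated defect sets with those of the originals, ultimately producing a bound of the shape $|\partial_\ver(U_j)| \leq 2 \beta_\ver |L \cup R|$. Dividing by $|U_j| \geq 2\varepsilon |V|$ then gives $\vcc \leq \beta_\ver / \varepsilon$, which yields the stated constants $60$ and $135$. Finally, the bound $\beta_{\ver, \Gamma} \geq \vcc_\Gamma / 135$ for the five classes of graphs is obtained by specifying $\varsigma_g$ and $\psi_{i,v}$ case by case, exactly as in the closing paragraph of the proof of \cref{Thm:betaEdgeEcc}.
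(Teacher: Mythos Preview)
Your overall architecture matches the paper's, but the crucial vertex-boundary estimate is where it breaks down. You assert that the casework will yield
\[
|\partial_\ver(U_j)| \leq 2\,\beta_\ver\,|L\cup R|,
\]
attributing every $w\in\partial_\ver(U_j)$ to one of six defect sets. Carry out the enumeration and you will find seven, not six. Take $U_1=(L\cap\tau(L))\cup(R\cap\varsigma_\tau(R))$ and a vertex $w\notin U_1$ with a neighbour $u\in U_1$. If $u\in L\cap\tau(L)$ and $w\in R$, then $w\notin\varsigma_\tau(R)$ (else $w\in U_1$), and from $\calN(\tau(L))=\varsigma_\tau(\calN(L))$ you are forced into $\varsigma_\tau(L\cap\calN(L))$ or $\varsigma_\tau(\partial_\ver(L\cup R))$. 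Symmetrically, if $u\in R\cap\varsigma_\tau(R)$ and $w\in L$, you land in $\tau(R\cap\calN(R))$ or $\tau(\partial_\ver(L\cup R))$. Together with the untranslated defect sets this gives
\[
|\partial_\ver(U_j)|\le 2|L\cap\calN(L)|+2|R\cap\calN(R)|+3|\partial_\ver(L\cup R)|\le 3\,\beta_\ver\,|L\cup R|,
\]
which is exactly what the paper obtains. The coefficient $3$ on $|\partial_\ver(L\cup R)|$ cannot be lowered to $2$ in general, because $\tau$ and $\varsigma_\tau$ need not coincide and both translates of $\partial_\ver(L\cup R)$ genuinely occur.

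With the correct factor $3$ and your choices $\varepsilon=\tfrac1{60}$, $\tfrac1{135}$, the conclusion becomes $\vcc\le\frac{3}{2\varepsilon}\beta_\ver$, i.e.\ $90\beta_\ver$ and $202.5\beta_\ver$, missing the target. The fix is simple: keep the \emph{same} $\varepsilon$ as in the edge theorem, namely $\varepsilon=\tfrac1{40}$ for $\mu=1$ and $\varepsilon=\tfrac1{90}$ for $\mu=2$ (these already satisfy the trapping-lemma inequalities), and combine $|U_j|\ge 2\varepsilon|V|$ with $|\partial_\ver(U_j)|\le 3\beta_\ver|V|$ to get $2\varepsilon\vcc\le 3\beta_\ver$, which yields exactly $60$ and $135$.
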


\begin{proof}
We set $\mu=1$ in part (1). In part (2), we set $\mu = 2$, and if $\psi_{i,v}$ fixes any index two subgroup of $\calG$, then we set $\mu = 1$. 
Note that 
$\frac{8\varepsilon}{1 - 2\varepsilon} < \min\left\{ 
\frac {1 - \varepsilon}2, 
\frac 14 - \frac {3\varepsilon}{2} , 
\frac {2\left( \frac 1{2\mu} - 3\varepsilon\right)^2 }{ 1 +\varepsilon} 
\right\}$
holds for $\varepsilon = \frac 1{40}$ (resp. $\varepsilon = \frac 1{90}$) if $\mu = 1$ (resp. $\mu = 2$). 

Let $L, R$ denote disjoint subsets of $V$ such that 
$\beta_\ver = \beta_\ver(L,R)$. 
We assume that $|L| \geq |R|$. 
Let $Y$ denote the subset 
$V\setminus (L \cup R \cup \calN(L\cup R))$ of $V$. 
If $|Y| \geq \frac{|V|}2$, then $|L \cup R| \leq \frac{|V|}{2}$, 
and hence 
$
\vcc  \leq \frac{|\partial_\ver (L\cup R)|}{|L \cup R|} \leq \beta_\ver$. 
If $\varepsilon |V| \leq |Y| < \frac{|V|}2$, then 
$$
\vcc  \leq \frac{|\partial_\ver (Y)|}{|Y|} =  \frac{|\partial_\ver (L \cup R)|}{|Y|}  \leq 
\frac{\beta_\ver |L \cup R|}{\varepsilon |V|} 
\leq \frac{\beta_\ver}\varepsilon.
$$
Moreover, if $|L| \geq \frac {1 + \varepsilon}2 |V|$, then 
$|L\setminus \calN(L)| = 
|L \cap (\cup \theta_i(L))^c |
=
|L \cap (\cap \theta_i(L^c))|
\leq 
|\theta_i(L^c)|
= |L^c|$, which implies that 
\begin{align*}
\beta_\ver 
 \geq \frac{|L\cap \calN(L)|}{|L\cup R|}
 \geq \frac{|L\cap \calN(L)|} {|V|}
 \geq \frac{|L| - |L^c|}{|V|}
 \geq \varepsilon
\geq \frac {2\varepsilon }3h  .
\end{align*}
Further, if $\beta_\ver \geq \varepsilon$ holds, we have 
$\vcc  \leq \vcc  \frac{\beta_\ver}\varepsilon
\leq \frac{\nu \beta_\ver}\varepsilon$. 
Henceforth, we assume that 
$$|Y| < \varepsilon |V|,
|L| < \frac {1 + \varepsilon} 2|V|,
 \beta_\ver < \varepsilon.$$
Note that 
\begin{align*}
|V \setminus (L \cup R) | 
 = |\partial_\ver (L \cup R)| + |Y| 
 \leq \beta_\ver |L\cup R| + |Y|
 < (\beta_\ver + \varepsilon)|V|.
\end{align*}
Using $|L|\geq |R|$, we obtain 
$|L| 
> 
\frac{1 - \beta_\ver - \varepsilon}2|V|
> 
\frac{1 - 2\varepsilon}2|V| $. 
Also note that $|\theta_i(L) \cap L| 
\leq |\calN(L) \cap L|
\leq
\beta_\ver |V|
$. 
Set $\scrV = L, \xi = 2\varepsilon , 
\zeta = \varepsilon,
\kappa = \beta_\ver, \delta = \frac{8\varepsilon}{1 - 2\varepsilon}$. 
Note that 
$\frac 1 {2\mu } - \xi - \kappa >0,
\delta > 0, 
\delta < \frac {1 - \zeta}2, 
\delta < \frac {1- 3\xi}4, 
\delta 
< \frac 2 {1 + \zeta} 
(
\frac 1 {2\mu } - \xi - \kappa
)^2
$
holds, i.e., Equations \eqref{Eqn:XiZetaKappaBdd1}, \eqref{Eqn:XiZetaKappaBdd2} hold.
Under the conditions of part (1), (2), it follows from the proof of \cref{Thm:betaEdgeEcc} that 
$
|L \cap \tau(L)| 
\in 
(\delta |L| , (1- \delta)|L|)
$
holds for some $\tau \in \calG$. 

For two disjoint subsets $A, B$ of $V$, note that
\begin{align*}
(A
\times  
B^c)
\cup 
(B
\times  
A^c)
& = 
(A \times A) \cup (A \times (A \cup B)^c) 
\cup 
(B \times B) \cup (B \times (A \cup B)^c)  \\
& = 
(A \times A) \cup (B \times B) \cup (A \cup B \times (A \cup B)^c) 
\end{align*}
holds. 
As in the proof of \cref{Thm:betaEdgeEcc}, for any subsets $A, B, C, D$ of $V$, 
\begin{align*}
((A\cap C) \cup (B \cap D))
\times  
((A\cap C) \cup (B \cap D))^c
 \subseteq 
\bigl(
(A
\times  
B^c)
\cup 
(B
\times  
A^c)
\bigr)
\cup 
\bigl(
(C
\times  
D^c)
\cup 
(D
\times  
C^c)
\bigr)
\end{align*}
holds. 
So, for any subsets $A, B, C, D$ of $V$ with $A\cap B = \emptyset$, we have 
\begin{align*}
&
((A\cap C) \cup (B \cap D))
\times  
((A\cap C) \cup (B \cap D))^c
\\
& \subseteq 
\bigl(
(A \times A) \cup (B \times B) \cup (A \cup B \times (A \cup B)^c)
\bigr)
\cup  
\bigl(
(C
\times  
D^c)
\cup 
(D
\times  
C^c)
\bigr).
\end{align*}
Consequently, for any subsets $A, B, C, D$ of $V$ with $A\cap B =C\cap D = \emptyset$, we have 
\begin{align*}
& |\partial_\ver ((A\cap \tau(C)) \cup (B \cap \varsigma_\tau (D)))|\\
& \leq 
|A\cap \calN(A)| + |B\cap \calN(B)|  + |\partial_\ver(A \cup B)| 
+ 
|\calN(\tau(C)) \cap \varsigma_\tau (D)^c | + |\calN(\varsigma_\tau (D)) \cap \tau(C)^c|\\
& \leq 
|A\cap \calN(A)| + |B\cap \calN(B)|  + |\partial_\ver(A \cup B)| 
+ 
|\calN(C) \cap D^c| + |\calN(D) \cap C^c|\\
& \leq 
|A\cap \calN(A)| + |B\cap \calN(B)|  + |\partial_\ver(A \cup B)|  \\
& + 
|\calN(C) \cap C | + |\calN(C) \cap (C\cup D)^c | + 
|\calN(D) \cap D | + |\calN(D) \cap (C\cup D)^c | \\
& \leq 
|A\cap \calN(A)| + |B\cap \calN(B)|  + |\partial_\ver(A \cup B)|  +
|\calN(C) \cap C | +
|\calN(D) \cap D | + 2|\partial_\ver(C \cup D)| .
\end{align*}
It follows that 
$$|\partial_\ver (
(L\cap \tau(L) ) \cup (R \cap \varsigma_\tau (R))
)|
\leq 
2|L\cap \calN(L)| + 2|R\cap \calN(R)| + 3|\partial_\ver(L \cup R)| ,$$
$$|\partial_\ver (
(L\cap \tau(R) ) \cup (R \cap \varsigma_\tau (L))
)|
\leq 
2|L\cap \calN(L)| + 2|R\cap \calN(R)| + 3|\partial_\ver(L \cup R)| .$$
Since the sets 
$
(L\cap \tau(L) ) \cup (R \cap \varsigma_\tau (R))
,
(L\cap \tau(R)) \cup (R \cap \varsigma_\tau (L))
$ 
are disjoint, one of them has size at most $|V|/2$, and hence, 
\begin{align*}
&
\vcc
\min 
\{
 |(L\cap \tau(L) ) \cup (R \cap \varsigma_\tau (R))|, 
 |(L\cap \tau(R)) \cup (R \cap \varsigma_\tau (L))|
\} \\
& \leq 
2|L\cap \calN(L)| + 2|R\cap \calN(R)| + 3|\partial_\ver(L \cup R)|  \\
& \leq 3 d\beta_\ver |L\cup R|\\
& \leq 3 d\beta_\ver |V|.
\end{align*}
As in the proof of \cref{Thm:betaEdgeEcc}, 
we have the bounds 
$$
|(L\cap \tau(L) ) \cup (R \cap \varsigma_\tau (R)) |
\geq 
\delta |L| 
\geq 4\varepsilon |V| ,$$
and 
\begin{align*}
|(L\cap \tau(R)) \cup (R \cap \varsigma_\tau (L))|
 \geq \delta |L| - 
|V \setminus (L \cup R)|
 \geq \left(4\varepsilon - (\beta_\ver + \varepsilon) \right)|V| 
 \geq 2\varepsilon |V|.
\end{align*}
Consequently, we obtain  $2\vcc \varepsilon |V| \leq 3\beta_\ver |V|$, or equivalently, $2\varepsilon \vcc \leq 3\beta_\ver$. This completes the proofs for the bounds stated in parts (1), (2). 

From the proof of 
\cref{Thm:betaEdgeEcc}, it follows that for any $A, B\subseteq V$, the sets $E(A, B)$, $E(g(A), \varsigma_g (B))$ are of the same size. 
The same proof also implies that if $\Gamma$ is a vertex-transitive graph, then the condition of part (1) holds, and if $\Gamma$ is a twisted Cayley graph or a twisted Cayley sum graph, then the condition of part (2) holds. 
Consequently, 
the inequality  
$
\beta_{\ver, \Gamma }
\geq
\frac
{\vcc_\Gamma}
{135 }
$
holds if $\Gamma$ is a Cayley graph, or a Cayley sum graph, or a twisted Cayley graph, or a twisted Cayley sum graph, or a vertex-transitive graph.
\end{proof}

We remark that the proofs of Theorems \ref{Thm:betaEdgeEcc}, \ref{Thm:betaEdgeVcc} rely on some counting arguments due to 
Moorman, Ralli and Tetali \cite{CayleyBottomBipartite}.

\section{The Cheeger constant of the square graph}
\label{Sec:CheegerConstantOfSquareGraph}
Let $V$ be a finite set. 
Let $\ell^2(V)$ denote the Hilbert space of functions $f: V \to \bbC$ equipped with the inner product 
$$\langle f, g\rangle : = \sum_{v\in V} f(v) \overline {g(v)}$$
and the norm 
$$||f||_{\ell^2(V)} 
: = 
\sqrt{\langle f, f\rangle}.$$
If $\rho: V \to V$ is a permutation, then it induces an operator $P_\rho:\ell^2(V) \to \ell^2(V)$ defined by 
$$(\rho\cdot f)(v) : = f(\rho^\mo v),$$
and in particular, $\rho \cdot 1_v =  1_{\rho(v)}$ since 
$(\rho \cdot 1_v) (u) 
= 
1_v (\rho^\mo u) 
= 1_{\rho(v) }(u)
$. 
An operator $P : \ell^2(V) \to \ell^2(V)$ is said to be a permutation operator if $P (\{1_v \,|\, v\in V\}) = \{1_v \,|\, v\in V\}$, i.e., $P = P_\rho$ for some permutation $\rho: V \to V$. 
Let $\calG$ be a finite group acting on $V$ from the left. The induced action of $\calG$ on $\ell^2(V)$ is defined by 
$$(g\cdot f)(v) : = f(g^\mo v).$$
Let $T: \ell^2(V) \to  \ell^2(V)$ be an  operator. 
For $F\subseteq V$, set 
$$
\vol_T (F) 
:= 
\langle T 1_V, 1_{F} \rangle
.$$
When $|V| \geq 2$ and $\vol_T(F)$ is a positive real number for any $\emptyset \neq F \subseteq V$, the edge Cheeger  constant of $T$, denoted by $\ecc_T$,  is defined by  
$$
\ecc_T 
:=
\min _{F \subseteq V, F \neq \emptyset, V }
\frac
{
\langle T 1_F, 1_{V\setminus F} \rangle
}
{
\min 
\{
\vol_T(F), \vol_T(V\setminus F)
\}
}.
$$

\begin{condition}
\label{Cond}
Let $V$ be a finite set with $|V|\geq 2$.
\begin{enumerate}
 \item 
 Let $T: \ell^2(V) \to \ell^2(V)$ be a self-adjoint operator. 
\item Suppose 
$
\langle 
T1_v, 1_u 
\rangle$ 
is a non-negative integer for any $u, v\in V$, and $T$ has degree $d >0$, i.e., $\vol_T(\{u\}) = d$ for any $u\in V$. 
\item  Let $\rho_1, \ldots, \rho_d: V\to V$ be permutations such that 
$T = P_{\rho_1} + \cdots + P_{\rho_d}$.

\item Assume that $-1$ is not an eigenvalue of $T$. 
\item 
Let $\calG$ be a group acting transitively on $V$, and assume that the action of $\calG$ on $\ell^2(V)$ commutes with the operator $T^2$, and that no index two subgroup of $\calG$ acts transitively on $V$. 
\end{enumerate}
\end{condition}

By the Birkhoff-von Neumann theorem \cite[Theorem 5.5]{vanLintWilsonCourseCombi}, under \cref{Cond}(2), there exist permutations $\rho_1, \ldots, \rho_d$ of $V$ such that 
$T = P_{\rho_1} + \cdots + P_{\rho_d}$.

\begin{theorem}
\label{Thm:Sec2Intro}
Suppose \cref{Cond} holds. 
The following statements hold. 
\begin{enumerate}
\item 
If the induced action of $\calG$ on $\ell^2(V)$ commutes with $T$, then 
$$
\ecc_{T^2}  
>
\frac 
{\ecc_{T}  ^{2}}{20d}
.
$$

\item If for each $1\leq i \leq d$ and $v\in V$, there is an automorphism or anti-automorphism $\psi_{i, v}$ of the group $\calG$ such that 
$P_{\rho_i} P_g (1_v)
= P_{\psi_{i, v}(g) } P_{\rho_i} (1_v)
$
holds for any $g\in \calG$, then 
$$
\ecc_{T^2}  
>
\frac 
{\ecc_{T}  ^{2}}{48 d}
.
$$
Further, if $\psi_{i, v}$ fixes any index two subgroup of $\calG$, then 
$$
\ecc_{T^2}  
>
\frac 
{\ecc_{T}  ^{2}}{20d}
.
$$
 \end{enumerate}
\end{theorem}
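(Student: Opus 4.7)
The plan is to parallel the arguments of \cref{Thm:betaEdgeEcc,Thm:betaEdgeVcc}, with the operator $T^2$ playing the role of the adjacency operator of $(V,E)$. By \cref{Cond}(2) and the Birkhoff--von Neumann theorem one has $T = P_{\rho_1} + \cdots + P_{\rho_d}$, so $T^2 = \sum_{i,j} P_{\rho_j\rho_i}$, and by \cref{Cond}(5) the action of $\calG$ on $\ell^2(V)$ commutes with $T^2$. The combinatorial substitute for the edge-count $|\partial_\edge F|$ used in \cref{Thm:betaEdgeEcc} is the identity
\[
\langle T^2 1_F, 1_{F^c}\rangle = \sum_{w\in V} (Tf)(w)\bigl(d - (Tf)(w)\bigr),
\]
where $f = 1_F$. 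This forces most vertices $w$ to satisfy $(Tf)(w) \in \{0, d\}$, with the quantitative bound $|V_{\mathrm{mid}}| := |\{w : (Tf)(w) \in [1, d-1]\}| \leq \ecc_{T^2}\, d^2 |F|/(d-1)$ being the key input.

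I would fix a nonempty proper $F \subsetneq V$ attaining $\alpha := \ecc_{T^2}$, noting that $\vol_{T^2}(F) = d^2|F|$ and so without loss of generality $|F| \leq |V|/2$. After disposing of the easy cases (where $|F|$ or the outside set $V \setminus (F \cup \calN(F))$ falls outside the main regime, or where the target constant is trivially met), one may assume $|F|$ is within $\varepsilon|V|$ of $|V|/2$ and $\alpha$ is small. To apply \cref{Prop}, one passes from $F$ to the auxiliary subsets $L := F \cap \{w : (Tf)(w) = 0\}$ and $R := F^c \cap \{w : (Tf)(w) = d\}$, i.e.\ vertices in $F$ with no $F$-neighbour and vertices in $F^c$ with all neighbours in $F$, respectively. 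By construction there are no internal $T$-edges within $L$ or within $R$, so $\sum_i |\rho_i(L) \cap L| = 0 = \sum_i |\rho_i(R) \cap R|$. Moreover, the $T^2$-Cheeger data controls the sizes of $L$ and $R$ from below in terms of $\alpha$ and $\ecc_T(F)$ via the bound on $|V_{\mathrm{mid}}|$.

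Next, I would invoke \cref{Prop} with $\scrV = L$ and parameters $(\mu, \xi, \zeta, \delta, \kappa)$ chosen as in the proofs of \cref{Thm:betaEdgeEcc,Thm:betaEdgeVcc}: $\mu = 1$ under part (1) and in the subgroup-fixing refinement of part (2), and $\mu = 2$ in the main case of part (2). The non-bipartiteness hypothesis that $-1 \notin \mathrm{spec}(T)$ supplies the input needed by \cref{Prop}(1): any partition $V_1 \sqcup V_2$ with each $\rho_i$ swapping the two halves would make $1_{V_1} - 1_{V_2}$ an eigenvector of $T$ with eigenvalue $-d$, forbidden by the hypothesis and the normalisation $T = P_{\rho_1} + \cdots + P_{\rho_d}$. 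The lemma then supplies $\tau \in \calG$ with $|L \cap \tau(L)| \in (\delta|L|, (1-\delta)|L|)$. Using $P_\tau T^2 = T^2 P_\tau$ together with the auxiliary permutation $\varsigma_\tau$ constructed exactly as in the proofs of \cref{Thm:betaEdgeEcc,Thm:betaEdgeVcc}, I would form the disjoint candidate sets $X_1 = (L \cap \tau(L)) \cup (R \cap \varsigma_\tau(R))$ and $X_2 = (L \cap \tau(R)) \cup (R \cap \varsigma_\tau(L))$, and apply the Cartesian-product inclusion from the proof of \cref{Thm:betaEdgeEcc} to bound $\langle T^2 1_{X_i}, 1_{X_i^c}\rangle$ by a universal multiple of $\langle T^2 1_F, 1_{F^c}\rangle \leq \alpha d^2|F|$. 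The trapping forces at least one of $X_1, X_2$ to have size between $\Omega(\varepsilon|V|)$ and $|V|/2$, producing a $T^2$-cut with Cheeger ratio $O(\alpha/\varepsilon)$; the minimality of $\alpha$ combined with calibration of $\varepsilon$ in terms of $\ecc_T$ and $d$ then yields the stated inequalities $\ecc_{T^2} > \ecc_T^2/(20d)$ and $\ecc_{T^2} > \ecc_T^2/(48d)$.

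The main obstacle is the correct choice of the trapping set and the translation of $T^2$-Cheeger data into a bound of the form $|\rho_i(L) \cap L| \leq \kappa|V|$ suitable for \cref{Prop}, while simultaneously maintaining a useful lower bound on $|L|$. In the proof of \cref{Thm:betaEdgeEcc} both requirements were enforced automatically by the structure of a bipartiteness minimiser $(L, R)$, where $\kappa = d\beta_\edge$ came directly from the definition of $\beta_\edge$; here, extracting both from the $T^2$-Cheeger minimiser $F$ requires a delicate case analysis distinguishing the regime where $F$ is already a good $T$-cut from the near-bipartite regime where $|E_T(F, F^c)|$ is large, using the concentration of $Tf$ near $\{0, d\}$ and the identity $\sum_i |\rho_i(F) \cap F| = \langle T 1_F, 1_F\rangle$. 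Once this case analysis and the resulting calibration of $\kappa$ are in place, verifying the joint constraints \eqref{Eqn:XiZetaKappaBdd1}--\eqref{Eqn:XiZetaKappaBdd2} and recovering exactly the constants $20$ and $48$ is bookkeeping analogous to that of the proofs of \cref{Thm:betaEdgeEcc,Thm:betaEdgeVcc}.
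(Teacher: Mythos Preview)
Your plan diverges from the paper's in a way that leaves a genuine gap at the closing step. You propose to produce a $T^2$-cut $X_i$ with Cheeger ratio $O(\alpha/\varepsilon)$ and then invoke ``the minimality of $\alpha$'', but $\alpha = \ecc_{T^2}$ is already the minimum $T^2$-Cheeger ratio, so the conclusion $\alpha \leq O(\alpha/\varepsilon)$ is vacuous and carries no information about $\ecc_T$. To obtain a bound involving $\ecc_T$ you would instead need a good $T$-cut, and for that the Cartesian-product inclusion requires a permutation $\varsigma_\tau$ with $|E_T(A,B)| = |E_T(\tau(A),\varsigma_\tau(B))|$; under the hypotheses of part~(2) only $T^2$ is $\calG$-equivariant (\cref{Cond}(5)), so no such $\varsigma_\tau$ for $T$-edges is available. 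Separately, your acknowledged obstacle of bounding $|L|$ from below is not resolved: nothing in the $T^2$-Cheeger data prevents $|F \cap \{w:(Tf)(w)=d\}|$ from being a positive fraction of $|F|$, and your ``delicate case analysis'' is only sketched.

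The paper avoids all of this by running the trapping argument in the \emph{opposite direction}. It takes $\scrV = A$ (the $T^2$-minimiser itself, with no auxiliary pair $(L,R)$) and, assuming $\psi = \ecc_{T^2}$ is small, establishes via a chain of preparatory lemmas a \emph{Dichotomy}: for every $g\in\calG$ one has either $\vol_T(A\cap gA)\geq (1-\delta)\vol_T(A)$ or $\vol_T(A\cap gA)\leq \delta\vol_T(A)$, with $\delta = \tfrac{2d\psi}{\ecc_T^2}(1+\ecc_T)$. Thus \emph{no} $\tau$ has intermediate intersection. The same chain also yields $|\calN(A)\cap A|\leq \kappa|V|$ from the basic inequality $\langle T 1_{A\cup\calN(A)},1_{(A\cup\calN(A))^c}\rangle \leq \langle T^2 1_A,1_{A^c}\rangle$ combined with $\ecc_T$-expansion of the complement. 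These verify the hypotheses of \cref{Lemma:IndexTwoSubgroup} and \cref{Prop}, so \cref{Prop} outputs a bipartite-like partition of $V$, contradicting \cref{Cond}(4). Hence one of the parameter constraints \eqref{Eqn:XiZetaKappaBdd1}--\eqref{Eqn:XiZetaKappaBdd2} must fail, and unwinding which one fails (with $x = \tfrac{d\psi}{\ecc_T}(1+\ecc_T)$, $\xi = \tfrac{x}{2+x}$, $\zeta=0$, $\kappa = \tfrac{x}{2}$, $\delta = \tfrac{2x}{\ecc_T}$) is what produces the constants $20$ and $48$. No $\varsigma_\tau$ for $T$-edges is ever needed, and no $T$-cut is ever constructed.
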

We will prove \cref{Thm:Sec2Intro} after obtaining \cref{Lemma:Dichotomy}. At the end of this section, we will establish \cref{Thm:EccEccBddAlgGraph} using \cref{Thm:Sec2Intro}. 
In the following, the edge Cheeger constant $\ecc_{T^2}$ of $T^2$ is denoted by $\psi$. Let $A$ be a subset of $V$ with $A\neq \emptyset, V$, satisfying $\vol_{T^2}(A) \leq \vol_{T^2}(V\setminus A)$ and 
$$\langle T^2 1_A, 1_{V\setminus A} \rangle = \psi \vol_{T^2}(A).$$
For a subset $X$ of $V$, define 
$$\calN(X) : = 
\{v\in V
\,|\,
\langle 
T1_v, 1_X
\rangle
\neq 0
\}.$$
Note that since $T$ is self-adjoint, we have 
$
\langle T 1_F, 1_{V\setminus F} \rangle
= 
\langle T 1_{V\setminus F} , 1_F\rangle
$
for any $F\subseteq V$, which implies 
\begin{equation}
\label{Eqn:LargeSetExpan}
\langle T 1_F, 1_{V\setminus F} \rangle
= 
\langle T 1_{V\setminus F} , 1_F\rangle
\geq \ecc_T \vol_T(V\setminus F)
\end{equation}
for any $F\subseteq V$ with $\vol_T(F) \geq \vol_T(V\setminus F)$. 

\begin{lemma}
\label{Lemma:BddXNXThroughX}
For any subset $X$ of $V$, 
\begin{align*}
\langle T 1_{X \cup \calN(X)}, 1_{V \setminus (X \cup \calN(X))} \rangle
\leq 
\langle T^2 1_X, 1_{V \setminus X} \rangle
\end{align*}
holds. 
\end{lemma}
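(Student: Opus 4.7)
The plan is to expand both sides through the diagonal decomposition of $T$ and exploit the positivity and integrality of matrix entries supplied by \cref{Cond}(2) together with the self-adjointness of $T$.

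First I would rewrite the right-hand side as
\[
\langle T^2 1_X, 1_{V\setminus X}\rangle
= \sum_{w\in V} \langle T 1_X, 1_w\rangle\,\langle T 1_w, 1_{V\setminus X}\rangle.
\]
The key observation is that the summand vanishes unless $w\in\calN(X)$: indeed, by self-adjointness $\langle T 1_X, 1_w\rangle = \langle T 1_w, 1_X\rangle$, and the latter is $0$ precisely when $w\notin\calN(X)$ by the definition of $\calN(X)$. Hence only $w\in\calN(X)$ contributes. Moreover for any $w\in\calN(X)$ the entry $\langle T 1_w, 1_X\rangle$ is a positive integer, so $\langle T 1_X, 1_w\rangle\geq 1$. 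This immediately yields
\[
\langle T^2 1_X, 1_{V\setminus X}\rangle \;\geq\; \sum_{w\in\calN(X)} \langle T 1_w, 1_{V\setminus X}\rangle.
\]

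Next I would analyze the left-hand side by the same ``eliminate nonneighbors'' argument. Writing
\[
\langle T 1_{X\cup\calN(X)}, 1_{V\setminus(X\cup\calN(X))}\rangle
= \sum_{w\in X\cup\calN(X)} \langle T 1_w, 1_{V\setminus(X\cup\calN(X))}\rangle,
\]
I would note that if $w\in X$, then every $v$ with $\langle T 1_w, 1_v\rangle\neq 0$ must lie in $\calN(X)$ (and hence not in $V\setminus(X\cup\calN(X))$), so the $w\in X$ terms contribute zero. Thus the left-hand side reduces to a sum over $w\in\calN(X)\setminus X$ only.

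Finally, since $V\setminus(X\cup\calN(X))\subseteq V\setminus X$ and all entries $\langle T 1_w, 1_v\rangle$ are non-negative, replacing the indicator of the smaller set by the indicator of $V\setminus X$ can only increase the sum, and enlarging the index set from $\calN(X)\setminus X$ to all of $\calN(X)$ only adds non-negative terms. Chaining these inequalities gives
\[
\langle T 1_{X\cup\calN(X)}, 1_{V\setminus(X\cup\calN(X))}\rangle \;\leq\; \sum_{w\in\calN(X)} \langle T 1_w, 1_{V\setminus X}\rangle \;\leq\; \langle T^2 1_X, 1_{V\setminus X}\rangle,
\]
which is the desired bound. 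I do not anticipate a serious obstacle here; the only subtlety is keeping track of which inclusions or entrywise lower bounds are allowed by non-negativity, integrality, and self-adjointness, all of which are guaranteed by \cref{Cond}.
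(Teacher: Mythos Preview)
Your proof is correct and is essentially the same argument as the paper's, just written out as explicit sums rather than as pointwise inequalities between functions. The paper compresses your steps into the chain $1_{X\cup\calN(X)}\leq 1_X+1_{\calN(X)}\leq 1_X+T1_X$ (your observation that $\langle T1_X,1_w\rangle\geq 1$ for $w\in\calN(X)$) and then uses $\langle T1_X,1_{V\setminus\calN(X)}\rangle=0$ (your observation that the $w\in X$ terms vanish); the underlying ideas are identical.
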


\begin{proof}
Note that 
\begin{align*}
\langle T 1_{X \cup \calN(X)}, 1_{V \setminus (X \cup \calN(X))} \rangle
& 
\leq
\langle T (1_{X} + 1_{\calN(X)}), 1_{V \setminus (X \cup \calN(X))} \rangle\\
& 
\leq
\langle T (1_{X} + T 1_{X}), 1_{V \setminus (X \cup \calN(X))} \rangle\\
& 
=
\langle T 1_X, 1_{V \setminus (X \cup \calN(X))} \rangle
+
\langle T^2 1_X, 1_{V \setminus (X \cup \calN(X))} \rangle\\
& 
\leq
\langle T 1_X, 1_{V \setminus  \calN(X)} \rangle
+
\langle T^2 1_X, 1_{V \setminus X} \rangle\\
& 
=
\langle T^2 1_X, 1_{V \setminus X} \rangle.
\end{align*}
\end{proof}

\begin{lemma}
\label{Lemma:ANAIsLarge}
If $\psi < \frac{\ecc_T}{d}$, then for any $g  \in \calG$, the inequality 
$$
\vol_T(g  A \cup \calN(g  A)) 
\geq 
\vol_T (V \setminus (g  A \cup \calN(g  A)))$$
holds. 
\end{lemma}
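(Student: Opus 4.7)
The plan is to argue by contradiction: fix $g\in \calG$, set $X = gA$, and suppose that $\vol_T(X \cup \calN(X)) < \vol_T(V \setminus (X \cup \calN(X)))$. The goal is to derive $\psi \geq \ecc_T/d$, contradicting the hypothesis $\psi < \ecc_T/d$.

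First I would transport the extremality of $A$ to the translate $X$ using that the $\calG$-action commutes with $T^2$. Since $T$ has degree $d$, one has $T 1_V = d\cdot 1_V$ and consequently $T^2 1_V = d^2\cdot 1_V$, so $\vol_T(F) = d|F|$ and $\vol_{T^2}(F) = d^2|F|$ for every $F\subseteq V$. In particular $\vol_{T^2}(X) = \vol_{T^2}(A)$, and the commutation $P_g T^2 = T^2 P_g$ together with the choice of $A$ yields
$$\langle T^2 1_X, 1_{V\setminus X}\rangle = \langle T^2 1_A, 1_{V\setminus A}\rangle = \psi\, \vol_{T^2}(A) = \psi\, \vol_{T^2}(X) = \psi d^2 |X|.$$

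Next I would apply the definition of $\ecc_T$ to the set $X \cup \calN(X)$. The contradiction hypothesis guarantees this is a proper nonempty subset of $V$ whose $T$-volume is the smaller of the two halves, so
$$\ecc_T \cdot \vol_T(X \cup \calN(X)) \leq \langle T 1_{X \cup \calN(X)}, 1_{V \setminus (X \cup \calN(X))}\rangle.$$
Invoking \cref{Lemma:BddXNXThroughX}, the right-hand side is bounded above by $\langle T^2 1_X, 1_{V\setminus X}\rangle = \psi d^2 |X|$. Combining this with the trivial inclusion bound $\vol_T(X \cup \calN(X)) \geq \vol_T(X) = d|X|$ gives $\ecc_T \leq \psi d$, which is the sought contradiction.

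The only delicate bookkeeping is checking that $X \cup \calN(X)$ is genuinely a proper nonempty subset of $V$, so that the Cheeger ratio for $T$ legitimately applies: nonemptiness is immediate from $A \neq \emptyset$, while properness is precisely the content of the contradiction hypothesis. Beyond this, no deeper ingredient enters; the whole argument runs on the regularity of $T$, the commutation of $\calG$ with $T^2$, and \cref{Lemma:BddXNXThroughX}, so I do not anticipate a substantial obstacle.
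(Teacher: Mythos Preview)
Your proposal is correct and follows essentially the same contradiction argument as the paper: assume the inequality fails, apply the Cheeger definition to $gA\cup\calN(gA)$, bound via \cref{Lemma:BddXNXThroughX}, and use the $\calG$-commutation with $T^2$ together with regularity to reach $\ecc_T \leq d\psi$. The paper's proof is the same chain of inequalities, just written slightly more tersely without the explicit bookkeeping on properness and nonemptiness.
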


\begin{proof}
Otherwise, using \cref{Lemma:BddXNXThroughX} and \cref{Cond}(2), (5), we would obtain 
\begin{align*}
 \ecc_T \vol_T(g  A) 
& \leq \ecc_T \vol_T(g  A \cup \calN(g  A)) \\
& \leq 
\langle T 1_{g  A \cup \calN(g  A)}, 1_{V \setminus (g  A \cup \calN(g  A))} \rangle\\
 & \leq \langle T^2 1_{g  A}, 1_{V \setminus g  A} \rangle\\
  & = \langle T^21_{A},  1_{V \setminus A} \rangle\\
& =  \psi \vol_{T^2}(A)\\
& = \psi d \vol_T(g  A) , 
\end{align*}
contradicting the hypothesis. 
\end{proof}

\begin{lemma}
\label{Lemma:AisHalfOfV}
If $\psi < \frac{\ecc_T}{d}$, then the inequalities 
$$
\vol_T  ( \calN(g  A) \cap g  A)  
\leq
2 \vol_T  (g  A) + 
\psi \vol_{T^2}(g  A)
-\vol_T(V) + \frac{\psi}{\ecc_T} \vol_{T^2} (g  A) 
$$
and 
$$
\vol_T(V) 
\leq
\left(
2 + d\psi + \frac{d\psi}{\ecc_T}
\right) 
\vol_T  (g  A) 
$$
hold for any $g \in \calG$. 
\end{lemma}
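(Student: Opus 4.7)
The plan is to combine the large-set trapping provided by \cref{Lemma:ANAIsLarge} with the reverse edge-count of \cref{Lemma:BddXNXThroughX} and finish with an elementary identity for $\langle T^2 1_{gA}, 1_{V\setminus gA}\rangle$. First, by \cref{Lemma:ANAIsLarge}, the set $F := gA \cup \calN(gA)$ satisfies $\vol_T(F) \geq \vol_T(V \setminus F)$, so \eqref{Eqn:LargeSetExpan} applied to $F$ gives $\langle T 1_F, 1_{V\setminus F}\rangle \geq \ecc_T \vol_T(V\setminus F)$. On the other hand, \cref{Lemma:BddXNXThroughX} bounds the left-hand side by $\langle T^2 1_{gA}, 1_{V\setminus gA}\rangle$, and the transitivity of $\calG$ together with its commutation with $T^2$ shows that the latter equals $\psi\, \vol_{T^2}(gA)$ (because $A$ realises the Cheeger ratio $\psi$ of $T^2$, and both $\vol_{T^2}$ and the $T^2$-boundary are $\calG$-invariant). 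Combining these two estimates yields
\[
\vol_T(V\setminus F) \leq \frac{\psi}{\ecc_T}\, \vol_{T^2}(gA).
\]

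Next, inclusion--exclusion gives $\vol_T(\calN(gA) \cap gA) = \vol_T(gA) + \vol_T(\calN(gA)) - \vol_T(F)$, and substituting the bound $\vol_T(F) \geq \vol_T(V) - (\psi/\ecc_T)\vol_{T^2}(gA)$ reduces the first desired inequality to the purely combinatorial bound $\vol_T(\calN(gA)) \leq \vol_T(gA) + \psi\, \vol_{T^2}(gA)$. To prove this, set $k_v := (T 1_{gA})(v)$; by \cref{Cond} this is an integer in $\{0,1,\ldots,d\}$, vanishing exactly off $\calN(gA)$ and satisfying $k_v \geq 1$ on $\calN(gA)$, while $(T 1_{V\setminus gA})(v) = d - k_v$ since $T 1_V = d \cdot 1_V$. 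Self-adjointness of $T$ therefore yields
\[
\psi\, \vol_{T^2}(gA) = \langle T^2 1_{gA}, 1_{V\setminus gA}\rangle = \langle T 1_{gA}, T 1_{V\setminus gA}\rangle = \sum_{v \in \calN(gA)} k_v(d - k_v).
\]
The factorisation $k_v(d-k_v) - (d - k_v) = (k_v - 1)(d - k_v) \geq 0$ for $v \in \calN(gA)$ gives
\[
\sum_{v \in \calN(gA)} k_v(d-k_v) \geq \sum_{v \in \calN(gA)} (d-k_v) = d|\calN(gA)| - \vol_T(gA),
\]
which is precisely the desired estimate $\vol_T(\calN(gA)) \leq \vol_T(gA) + \psi\, \vol_{T^2}(gA)$.

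The second inequality of the lemma then follows at once from the first by discarding the non-negative quantity $\vol_T(\calN(gA) \cap gA)$ on the left-hand side and rewriting $\psi\,\vol_{T^2}(gA) = d\psi\, \vol_T(gA)$ and $(\psi/\ecc_T)\vol_{T^2}(gA) = (d\psi/\ecc_T)\vol_T(gA)$. The main obstacle is the passage from the $T^2$-boundary of $gA$ to an upper bound on $|\calN(gA)|$; the identity $\langle T^2 1_{gA}, 1_{V\setminus gA}\rangle = \sum_v k_v(d-k_v)$ together with the integrality of $k_v$ on $\calN(gA)$ (forcing $k_v \geq 1$ there) is the decisive step, and it relies crucially on the hypothesis of \cref{Cond}(3) that $T$ decomposes as a sum of permutation operators.
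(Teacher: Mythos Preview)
Your proof is correct and follows the same architecture as the paper: use \cref{Lemma:ANAIsLarge}, Equation~\eqref{Eqn:LargeSetExpan} and \cref{Lemma:BddXNXThroughX} to bound $\vol_T(V\setminus(gA\cup\calN(gA)))$, then combine with inclusion--exclusion and the bound $\vol_T(\calN(gA))\leq \vol_T(gA)+\psi\,\vol_{T^2}(gA)$. The only difference is how that last bound is obtained: the paper uses the pointwise inequality $1_{\calN(gA)}\leq T1_{gA}$ and positivity of $T$ to pass $\langle 1_{V\setminus gA},T1_{\calN(gA)}\rangle\leq\langle 1_{V\setminus gA},T^2 1_{gA}\rangle$, while you expand $\langle T^2 1_{gA},1_{V\setminus gA}\rangle=\sum_v k_v(d-k_v)$ and use $(k_v-1)(d-k_v)\geq 0$; these are two packagings of the same fact, and note that integrality of $k_v$ already follows from \cref{Cond}(2), so your appeal to \cref{Cond}(3) at the end is not actually needed.
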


\begin{proof}
If $\psi < \frac{\ecc_T}{d}$, then using  \cref{Lemma:ANAIsLarge},  
Equation 
\eqref{Eqn:LargeSetExpan} and \cref{Lemma:BddXNXThroughX},
we obtain 
\begin{align*}
\ecc_T \vol_T (V \setminus (g  A \cup \calN(g  A)))
 \leq 
\langle T 1_{g  A \cup \calN(g  A)}, 1_{V \setminus (g  A \cup \calN(g  A))} \rangle
 \leq \langle T^2 1_{g  A}, 1_{V \setminus g  A} \rangle
 = \psi \vol_{T^2}(g  A),
\end{align*}
which implies that 
\begin{align*}
\vol_T(V) - \frac{\psi}{\ecc_T} \vol_{T^2} (g  A) 
& \leq 
\vol_T  (g  A \cup \calN(g  A)) \\
& = 
\langle 
T1_V , 1_{g  A \cup \calN(g  A)} \rangle \\
& = 
\langle 
T1_V , 1_{g  A} + 1_ { \calN(g  A)} - 1_{\calN(g  A) \cap g  A}\rangle \\
& = 
\vol_T  (g  A) - \vol_T  ( \calN(g  A) \cap g  A)  + 
\langle 
T1_V , 1_ { \calN(g  A) }\rangle \\
& = 
\vol_T  (g  A) - \vol_T  ( \calN(g  A) \cap g  A)  + 
\langle 
1_{V\setminus g  A}  , T 1_ { \calN(g  A) }\rangle 
+
\langle 
1_{g  A}, T 1_ { \calN(g  A) }\rangle \\
& \leq
\vol_T  (g  A) - \vol_T  ( \calN(g  A) \cap g  A)  + 
\langle 
1_{V\setminus g  A}  , T^2 1_{g  A}\rangle 
+
\langle 
T1_{g  A}, 1_V\rangle \\
& \leq
\vol_T  (g  A) - \vol_T  ( \calN(g  A) \cap g  A)  + 
\psi \vol_{T^2}(g  A)
+\vol_T  (g  A).
\end{align*}
The inequalities follow. 
\end{proof}

\begin{lemma}
\label{Lemma:PreDichotomy}
For any subsets $X, Y$ of $V$, 
\begin{align*}
& 
\langle 
T 1_X, 1_ X 
\rangle 
 + 
\langle 
 1_{X^c}, T1_{ X^c } 
\rangle 
 \leq 2\vol_T  ( \calN(X) \cap X)  
 + 
 \vol_T(V) 
 - 2 \vol_T(X) , \\
& \langle 
T 1_{X \Delta Y^c} 
, 1_{V\setminus 
(X \Delta Y^c )
}
\rangle  
  \leq \sum_{\calX= X, Y}
(
\langle 
T 1_\calX, 1_ \calX
\rangle 
 + 
\langle 
 1_{\calX^c}, T1_{\calX^c } 
\rangle 
)
\end{align*}
hold. 
Consequently, 
if $\psi < \frac{\ecc_T}{d}$, then the inequalities 
\begin{align*}
 \ecc_T 
\min \{
\vol_T(A \Delta (gA)^c) , 
\vol_T(V \setminus (A \Delta (gA)^c) )
\}
& \leq \langle 
T 1_{A \Delta (gA)^c} 
, 1_{V\setminus 
(A \Delta (gA)^c )
}
\rangle  \\
& \leq
\sum_{\calX= A, gA}
\frac{2\psi}{\ecc_T} (1 + \ecc_T) \vol_{T^2} (\calX)
\end{align*}
hold.
\end{lemma}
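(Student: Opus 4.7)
The plan is to establish the two inner-product identities in turn and then combine them with Lemma \ref{Lemma:AisHalfOfV} and the defining property of $\ecc_T$ to deduce the final double inequality.

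\emph{First inequality.} I would begin by expanding
\[\vol_T(V) = \langle T(1_X + 1_{X^c}), 1_X + 1_{X^c}\rangle\]
and using self-adjointness of $T$ to obtain $\langle T 1_X, 1_X\rangle + \langle 1_{X^c}, T 1_{X^c}\rangle = \vol_T(V) - 2\langle T 1_X, 1_{X^c}\rangle$. Since $\vol_T(X) = \langle T 1_X, 1_X\rangle + \langle T 1_X, 1_{X^c}\rangle$ (again by self-adjointness), this reduces the claim to showing $\langle T 1_X, 1_X\rangle \leq \vol_T(X \cap \calN(X))$. For $u \in X \setminus \calN(X)$ self-adjointness gives $\langle T 1_X, 1_u\rangle = \langle 1_X, T 1_u\rangle = 0$, so $\langle T 1_X, 1_X\rangle$ is supported on $X \cap \calN(X)$; bounding the contribution at each such $u$ by $\vol_T(\{u\})$ finishes the step.

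\emph{Second inequality.} Using the decomposition $X \Delta Y^c = (X \cap Y) \sqcup (X^c \cap Y^c)$ with complement $(X \cap Y^c) \sqcup (X^c \cap Y)$, I would expand $\langle T 1_{X \Delta Y^c}, 1_{V \setminus (X \Delta Y^c)}\rangle$ bilinearly into four non-negative cross terms. Each pairs two disjoint subsets of a common one of $X$, $X^c$, $Y$, $Y^c$; for instance $\langle T 1_{X \cap Y}, 1_{X \cap Y^c}\rangle \leq \langle T 1_X, 1_X\rangle$ since $T$ has non-negative entries. Summing these four bounds and using self-adjointness to identify $\langle T 1_{\calX^c}, 1_{\calX^c}\rangle$ with $\langle 1_{\calX^c}, T 1_{\calX^c}\rangle$ gives the inequality.

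\emph{Final chain.} The lower bound is just the defining inequality of $\ecc_T$ applied to $F = A \Delta (gA)^c$, trivial if $F$ is empty or all of $V$. For the upper bound I would apply the second inequality with $X = A$, $Y = gA$, then the first inequality to each $\calX \in \{A, gA\}$, and finally Lemma \ref{Lemma:AisHalfOfV} (whose hypothesis $\psi < \ecc_T/d$ is in force) to replace $\vol_T(\calN(\calX) \cap \calX)$. Routine algebra collapses the summand for each $\calX$ to $\frac{2\psi}{\ecc_T}(1 + \ecc_T)\vol_{T^2}(\calX) + [2\vol_T(\calX) - \vol_T(V)]$. The main obstacle is showing the bracketed residual is non-positive: this uses $T 1_V = d\, 1_V$, so that $\vol_T(\cdot) = d|\cdot|$ and $\vol_{T^2}(\cdot) = d^2|\cdot|$, combined with the standing hypothesis $\vol_{T^2}(A) \leq \vol_{T^2}(V \setminus A)$, which forces $|A| \leq |V|/2$, and $|gA| = |A|$ since $\calG$ acts by bijections.
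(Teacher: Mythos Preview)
Your argument is correct. The first inequality and the final chain follow essentially the paper's approach; you even spell out the last step $2\vol_T(\calX)-\vol_T(V)\le 0$ (via $\vol_T(\,\cdot\,)=d|\cdot|$, $|A|\le |V|/2$, and $|gA|=|A|$), which the paper leaves implicit after its final displayed line.

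For the second inequality your route differs genuinely from the paper's. The paper invokes the Birkhoff--von Neumann decomposition $T=\sum_i P_{\rho_i}$, rewrites $\langle T 1_{X\Delta Y^c},1_{X^c\Delta Y^c}\rangle$ as $\sum_i |(\rho_i(X)\Delta \rho_i(Y^c))\cap(X^c\Delta Y^c)|$, passes to a symmetric difference, and then applies the triangle inequality $|P\Delta Q\Delta R\Delta S|\le |P\Delta Q|+|R\Delta S|$ to split off the $X$- and $Y$-contributions. Your bilinear expansion into the four cross terms $\langle T1_{X\cap Y},1_{X\cap Y^c}\rangle$, $\langle T1_{X\cap Y},1_{X^c\cap Y}\rangle$, $\langle T1_{X^c\cap Y^c},1_{X\cap Y^c}\rangle$, $\langle T1_{X^c\cap Y^c},1_{X^c\cap Y}\rangle$, each bounded by the corresponding diagonal term $\langle T1_X,1_X\rangle$, $\langle T1_Y,1_Y\rangle$, $\langle T1_{Y^c},1_{Y^c}\rangle$, $\langle T1_{X^c},1_{X^c}\rangle$ via non-negativity of the matrix entries, is more elementary: it uses only \cref{Cond}(2) and avoids the permutation decomposition of \cref{Cond}(3) altogether. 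The paper's approach, by contrast, makes the symmetry between $X$ and $Y$ more transparent through the symmetric-difference calculus, at the cost of an extra structural ingredient.
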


\begin{proof}
Note that 
\begin{align*}
\langle 
T 1_X, 1_ X 
\rangle 
 + 
\langle 
 1_{X^c}, T1_{ X^c } 
\rangle 
& = 
\langle 
T 1_X, 1_ X 
\rangle 
 + 
\langle 
T1_V - T1_X , 1_V - 1_X
\rangle  \\
& = 
2\langle 
T 1_X, 1_ X 
\rangle 
 + 
 \langle 
T 1_V, 1_V
\rangle 
-
\langle 
T 1_V, 1_ X 
\rangle 
-
\langle 
T 1_X, 1_ V
\rangle   \\
& = 
2\langle 
T 1_X, 1_ X 
\rangle 
 + 
 \vol_T(V) 
 - 2 \vol_T(X) \\
 & \leq 2\vol_T  ( \calN(X) \cap X)  
 + 
 \vol_T(V) 
 - 2 \vol_T(X) 
\end{align*}
and 
\begin{align*}
 \langle 
T 1_{X \Delta Y^c} 
, 1_{V\setminus 
(X \Delta Y^c )
}
\rangle  
& = \langle 
T 1_{X \Delta Y^c} 
, 1_{
X^c \Delta Y^c 
}
\rangle  \\
& = \sum_{i = 1}^d \langle 
P_{\rho_i} 1_{X \Delta Y^c} 
, 1_{
X^c \Delta Y^c 
}
\rangle  \\
& = \sum_{i = 1}^d \langle 
 1_{P_{\rho_i}(X)\Delta P_{\rho_i}(Y^c)} 
, 1_{
X^c \Delta Y^c 
}
\rangle  \\
& = \sum_{i = 1}^d 
|( P_{\rho_i}(X)\Delta P_{\rho_i}(Y^c))
\cap 
(
X^c \Delta Y^c 
)| \\
& \leq \sum_{i = 1}^d 
|( P_{\rho_i}(X)\Delta P_{\rho_i}(Y^c))
\Delta 
(
X^c \Delta Y
)| \\
& = \sum_{i = 1}^d 
| P_{\rho_i}(X)\Delta X^c \Delta P_{\rho_i}(Y^c)
\Delta 
Y | \\
& \leq \sum_{i = 1}^d 
(
| P_{\rho_i}(X)\Delta X^c | + |P_{\rho_i}(Y^c)
\Delta 
Y |
) \\
& = \sum_{i = 1}^d 
(
| P_{\rho_i}(X)\Delta X^c | + |P_{\rho_i}(Y)
\Delta 
Y^c |
) \\
& = \sum_{\calX= X, Y}\sum_{i = 1}^d 
(
| P_{\rho_i}(\calX)\cap \calX| + |\calX^c \cap P_{\rho_i}(\calX^c) | 
) \\
& = \sum_{\calX= X, Y}\sum_{i = 1}^d 
(
\langle 
1_{P_{\rho_i}(\calX)}, 1_ \calX 
\rangle 
 + 
\langle 
 1_{\calX^c}, 1_{ P_{\rho_i}(\calX^c) } 
\rangle 
)\\
& = \sum_{\calX= X, Y}
(
\langle 
T 1_\calX, 1_ \calX 
\rangle 
 + 
\langle 
 1_{\calX^c}, T1_{ \calX^c } 
\rangle 
)
\end{align*}
hold. 
Combining the above with \cref{Lemma:AisHalfOfV} and 
Equation 
\eqref{Eqn:LargeSetExpan}, we obtain 
\begin{align*}
& \ecc_T 
\min \{
\vol_T(A \Delta (gA)^c) , 
\vol_T(V \setminus (A \Delta (gA)^c) )
\}
\\
& \leq \langle 
T 1_{A \Delta (gA)^c} 
, 1_{V\setminus 
(A \Delta (gA)^c )
}
\rangle  \\
& \leq \sum_{\calX= A, gA}
(
2\vol_T  ( \calN(\calX) \cap \calX)  
 + 
 \vol_T(V) 
 - 2 \vol_T(\calX)
 )\\
& \leq \sum_{\calX= A, gA}
\left(
2\vol_T(\calX) - \vol_T(V)
+
2\psi \vol_{T^2}(\calX)
+ \frac{2\psi}{\ecc_T} \vol_{T^2} (\calX)
\right).
\end{align*}
\end{proof}

\begin{lemma}
\label{Lemma:Dichotomy}
Suppose $\psi < \frac{\ecc_T}{d}$. Then for a given $g\in G$,  one of the inequalities 
\begin{align}
\vol_T(A \cap gA)
& \geq 
 \vol_T(A) 
 \left( 
 1
 - 
\frac{2d\psi}{\ecc_T^2} (1 + \ecc_T)
\right), 
\label{Eqn:DichoLarge} \\
\vol_T(A \cap gA)
& \leq 
 \vol_T(A) 
 \left( 
\frac{2d\psi}{\ecc_T^2} (1 + \ecc_T)
\right)
\label{Eqn:DichoSmall}
\end{align}
  holds. 
\end{lemma}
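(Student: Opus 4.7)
The plan is to feed $X = A$ and $Y = gA$ into \cref{Lemma:PreDichotomy} and then carefully extract the dichotomy from the two candidates in its min.

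First I would record the $\calG$-invariances. Since $\calG$ acts on $V$ by permutations, $\vol_T(gA) = d|gA| = d|A| = \vol_T(A)$. Since the action on $\ell^2(V)$ commutes with $T^2$ (\cref{Cond}(5)), $\vol_{T^2}(gA) = \langle T^2 1_V, 1_{gA}\rangle = \langle T^2 1_V, P_g 1_A\rangle = \langle P_g^{-1} T^2 1_V, 1_A\rangle = \langle T^2 1_V, 1_A\rangle = \vol_{T^2}(A)$. Also by \cref{Cond}(2), $\vol_{T^2}(A) = d\vol_T(A)$. Setting $M := \frac{4d\psi(1+\ecc_T)}{\ecc_T^2}\vol_T(A)$, the conclusion of \cref{Lemma:PreDichotomy} then reads
$$\min\{\vol_T(A \Delta (gA)^c),\ \vol_T(V \setminus (A \Delta (gA)^c))\} \leq M.$$

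Next I would rewrite these two quantities in terms of $\vol_T(A\cap gA)$. Using $X \Delta V = X^c$, we have $A \Delta (gA)^c = A \Delta gA \Delta V$, so $V \setminus (A \Delta (gA)^c) = A \Delta gA$. Combined with $\vol_T(gA)=\vol_T(A)$, this gives $\vol_T(A \Delta gA) = 2\vol_T(A) - 2\vol_T(A \cap gA)$ and $\vol_T(A \Delta (gA)^c) = \vol_T(V) - \vol_T(A \Delta gA) = \vol_T(V) - 2\vol_T(A) + 2\vol_T(A \cap gA)$.

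Now I split into two cases. If the minimum is attained by $\vol_T(A \Delta gA)$, then $2\vol_T(A) - 2\vol_T(A \cap gA) \leq M$, which rearranges to \eqref{Eqn:DichoLarge}. If the minimum is attained by $\vol_T(A \Delta (gA)^c)$, then $\vol_T(V) - 2\vol_T(A) + 2\vol_T(A \cap gA) \leq M$. The hypothesis $\vol_{T^2}(A) \leq \vol_{T^2}(V \setminus A)$ combined with $\vol_{T^2} = d\vol_T$ on singletons (hence on all subsets, by linearity) yields $\vol_T(A) \leq \vol_T(V)/2$, so $\vol_T(V) - 2\vol_T(A) \geq 0$; dropping this non-negative term on the left gives $2\vol_T(A \cap gA) \leq M$, which is exactly \eqref{Eqn:DichoSmall}.

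The only subtle step is the cancellation in the second case: without the inequality $\vol_T(V) \geq 2\vol_T(A)$ one cannot conclude the small-intersection alternative. Everything else is $\calG$-equivariance bookkeeping and the set-theoretic identity relating $A \Delta (gA)^c$ to $A \Delta gA$, so I do not expect a genuine obstacle beyond carefully tracking the constants through \cref{Lemma:PreDichotomy}.
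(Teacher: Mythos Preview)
Your proposal is correct and follows essentially the same route as the paper: both apply \cref{Lemma:PreDichotomy} with $X=A$, $Y=gA$, rewrite $V\setminus(A\Delta(gA)^c)=A\Delta gA$ in terms of $\vol_T(A\cap gA)$, and split according to which term realizes the minimum, using $\vol_T(V)\geq 2\vol_T(A)$ to discard the non-negative term in the second case. The only difference is cosmetic: you make the identities $\vol_T(gA)=\vol_T(A)$, $\vol_{T^2}(A)=d\vol_T(A)$, and $\vol_{T^2}(gA)=\vol_{T^2}(A)$ explicit, whereas the paper folds them silently into the final rearrangements.
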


\begin{proof}
Note that 
\begin{align*}
 \vol_T(V \setminus (A \Delta (gA)^c) )
& = \langle T1_V , 1_V - 1_{A \Delta (gA)^c}\rangle \\
& = \langle T1_V , 1_{(A \Delta (gA)^c)^c}\rangle \\
& = \langle T1_V , 1_{A \Delta gA}\rangle \\
& = \langle T1_V , 1_A + 1_{gA} - 21_{A \cap (gA)}\rangle \\
& = \vol_T(A) + \vol_T(gA) - 2\vol_T(A \cap gA).
\end{align*}
If 
$\vol_T(A \Delta (gA)^c) >  \vol_T (V \setminus (A \Delta (gA)^c))$, then 
using Equation 
\eqref{Eqn:LargeSetExpan} and \cref{Lemma:PreDichotomy} under the hypothesis $\psi < \frac{\ecc_T}{d}$, we obtain 
\begin{align*}
 \vol_T(A) + \vol_T(gA) - 2\vol_T(A \cap gA)
 =  \vol_T (V \setminus (A \Delta (gA)^c))
\leq \sum_{\calX= A, gA}
\frac{2\psi}{\ecc_T^2} (1 + \ecc_T) \vol_{T^2} (\calX), 
\end{align*}
which implies Equation \eqref{Eqn:DichoLarge}. 
Further, if  $\vol_T(A \Delta (gA)^c) \leq \vol_T (V \setminus (A \Delta (gA)^c))$, then applying \cref{Lemma:PreDichotomy}, we obtain 
\begin{align*}
 2\vol_T(A \cap gA)
& \leq \vol_T(V) 
- \vol_T(A) - \vol_T(gA) + 2\vol_T(A \cap gA)\\
& =  \vol_T(A \Delta (gA)^c) \\
& \leq
\sum_{\calX= A, gA}
\frac{2\psi}{\ecc_T^2} (1 + \ecc_T) \vol_{T^2} (\calX).
\end{align*}
This completes the proof. 
\end{proof} 

\begin{proof}
[Proof of \cref{Thm:Sec2Intro}]
We set $\mu=1$ in part (1). In part (2), we set $\mu = 2$, and if $\psi_{i,v}$ fixes any index two subgroup of $\calG$, then we set $\mu = 1$. 
Set 
 $$\scrV = A, x = \frac{d\psi}{\ecc_T}(1+\ecc_T),\xi = \frac {x}{2+x}, \zeta = 0, \delta = \frac{2x}{\ecc_T}, \kappa = \frac x2.$$
 Note that when $\mu =1$ (resp. $\mu =2$), it suffices to consider the case $x\leq \frac 1{10}$ (resp. $x\leq \frac 1{24}$), which we assume from now on. Note that $\psi < \frac{\ecc_T}d$ holds. 
We claim that $\delta \geq   \frac{1-3\xi}4$  (resp. $\delta \geq \frac 2 {1 + \zeta} 
(
\frac 1 {2\mu } - \xi - \kappa
)^2
$) when $\mu=1$ (resp. $\mu=2$). 
On the contrary, let us assume  that
$\delta <  \frac{1-3\xi}4$  (resp. $\delta < \frac 2 {1 + \zeta} 
(
\frac 1 {2\mu } - \xi - \kappa
)^2
$) when $\mu=1$ (resp. $\mu=2$). 
Under this assumption, Equations \eqref{Eqn:XiZetaKappaBdd1}, \eqref{Eqn:XiZetaKappaBdd2} hold.

To prove part (1), assume that the induced action of $\calG$ on $\ell^2(V)$ commutes with $T$. 
Since $\delta < \frac 12$ and $\psi < \frac{\ecc_T}d$, by \cref{Lemma:Dichotomy}, 
$
|\scrV \cap \tau(\scrV)| 
\in 
(\delta |\scrV| , (1- \delta)|\scrV|)
$
holds for no $\tau\in \calG$. 
Also note that $|\calN(\scrV) \cap \scrV| \leq \kappa |V|$. By \cref{Prop}(2), there exists an orbit $\calO$ of the subgroup $H_\delta$ such that for any map $f:\calO\to \calO$ with 
$$\text{
$f(X) \subseteq \calN(X)$ for any $X\subseteq \calO$,}$$
the inequality 
$\frac {|f(\calO)\cap \calO|}{|V|}< \frac 1{2}
$
holds. 
By \cref{Cond}(4), it follows that 
there are elements $u, v$ of $\calO^\dag$ such that $\langle T1_v, 1_u\rangle\neq 0 $ where $\calO^\dag$ is one of $\calO, \calO^c = V \setminus \calO$. 
Under the hypothesis of \cref{Thm:Sec2Intro}(1),
it follows that for some map $f:\calO\to \calO$ with 
$$\text{
$f(X) \subseteq \calN(X)$ for any $X\subseteq \calO$,}$$
we have 
$|f(\calO)\cap \calO| = \frac{|V|}2
$, as in the proof of  \cref{Thm:betaEdgeEcc}. This contradicts $|f(\calO)\cap \calO|< \frac{|V|}2$.

To prove part (2), let $H$ denote an index two subgroup of  $\calG$ and assume that $u,v$ are elements of $V$ lying in the same $H$-orbit with $u = \rho_i(v)$ for some $1\leq i \leq d$. Under the hypothesis of \cref{Thm:Sec2Intro}(2), for $H^\dag = H, H^c$, it follows that $|\rho_i (H^\dag v) \cap H^\dag u| \geq \frac{|V|}{2\mu}$, as in the proof of  \cref{Thm:betaEdgeEcc}. 
Since $\delta < \frac 12$ and $\psi < \frac{\ecc_T}d$, by \cref{Lemma:Dichotomy}, 
$
|\scrV \cap \tau(\scrV)| 
\in 
(\delta |\scrV| , (1- \delta)|\scrV|)
$
holds for no $\tau\in \calG$. 
By \cref{Prop}(1), $V$ admits a partition into disjoint subsets $V_1, V_2$ of equal size such that $V_j \cap (\cup_{i=1}^d \rho_i(V_j)) = \emptyset$ for any $j$. This contradicts \cref{Cond}(4). 

This proves the claim that $\delta \geq   \frac{1-3\xi}4$  (resp. $\delta \geq \frac 2 {1 + \zeta} 
(
\frac 1 {2\mu } - \xi - \kappa
)^2
$) when $\mu=1$ (resp. $\mu=2$). 
From $\delta \geq \frac {1 - 3\xi}4$, we obtain 
$\ecc_T \leq \frac{4x(x+2)}{1-x} 
\leq \frac{4x(\frac 1{10}+2)}{1-\frac 1 {10}} 
< 10x
$, which yields $\psi > \frac{\ecc_T^2}{10d(1 + \ecc_T)  }\geq \frac{\ecc_T^2}{20d}$.  
From 
$\delta \geq \frac 2 {1 + \zeta} 
(
\frac 1 {2\mu } - \xi - \kappa
)^2
$ with $\mu =2$, we obtain  
$\ecc_T \leq \frac{16x(x+2)^2}{(2x^2+7x-2)^2}
\leq \frac
{16x(\frac 1{24} +2)^2}
{(2 - \frac 2{24^2} - \frac 7{24})^2}
<24x
$, which yields $\psi > \frac{\ecc_T^2}{24 d(1 + \ecc_T)  }\geq \frac{\ecc_T^2}{48d}$.  
\end{proof}

We remark that the proof of \cref{Thm:Sec2Intro} relies on certain counting arguments due to Breuillard, Green, Guralnick and Tao \cite[Appendix E]{BGGTExpansionSimpleLie}.

\begin{proof}
[Proof of \cref{Thm:EccEccBddAlgGraph}]
Note that \cref{Cond} holds for vertex-transitive graphs.  So, 
\cref{Thm:Sec2Intro} implies 
\cref{Thm:EccEccBddAlgGraph} for vertex-transitive graphs. To prove \cref{Thm:EccEccBddAlgGraph}, it suffices to show that it holds for  twisted Cayley graphs and twisted Cayley sum graphs. Note that \cref{Cond}(2), (3) hold for these graphs. Since they are assumed to be undirected and non-bipartite, \cref{Cond}(1), (4) also hold. Also note that the vertex set of a twisted Cayley graph, and that  of a twisted Cayley sum graph carries an action of the underlying group via left multiplication. This action is transitive and no index two subgroup of the group acts transitively on the vertex set.

Let us consider a twisted Cayley graph $C(G, S)^\sigma$. Let $A$ denote the sum of the permutation operators on $\ell^2(G)$ corresponding to the right multiplication maps by the elements of   $S$. Let $P$ be a permutation operator on $\ell^2(G)$ such that $PA$ is self-adjoint. 
For $g\in G$, let $P_g$ denote the permutation operator on $\ell^2(G)$ corresponding to the left multiplication map by $g$.
Note that 
$(PA)^2 = 
(PA)(PA) 
=(PA)^t (PA) 
=A^t P^t PA
=A^t A $, 
and hence, for any $g\in G$, 
$
P_g (PA)^2 P_g^\mo
=P_g A^t A P_g^\mo
= A^t A = (PA)^2$ hold. 
So, the permutation operators on $\ell^2(G)$, corresponding to the left multiplication map by the elements of $G$, commute with the square of the adjacency operator of $C(G, S)^\sigma$, i.e., \cref{Cond}(5) holds for $C(G, S)^\sigma$.

Now, let us consider a twisted Cayley sum graph $C_\Sigma(G, S)^\sigma$. 
Let $A$ denote the sum of the permutation operators on $\ell^2(G)$ corresponding to the right multiplication maps by the elements of   $S$. 
Let $\calI$ denote the permutation operator on $\ell^2(G)$ corresponding to the inversion map on $G$. 
Let $P$ be a permutation operator on $\ell^2(G)$ such that $PA\calI$ is self-adjoint. 
For $g\in G$, let $P_g$ denote the permutation operator on $\ell^2(G)$ corresponding to the left multiplication map by $g$.
Note that 
$(PA\calI )^2 = 
(PA\calI )(PA\calI ) 
=(PA\calI )^t (PA\calI ) 
=\calI ^t A^t P^t PA \calI 
=\calI ^t A^t A \calI $.
For any $g \in G$, 
\begin{align*}
(P_g (PA\calI )^2 P_g^\mo)(1_v)
& = (P_g  \calI ^t A^t A \calI P_g^\mo)(1_v)\\
& = (P_g  \calI ^t A^t A \calI )(1_{g^\mo v})\\
& = (P_g  \calI ^t A^t A) (1_{v^\mo g})\\
& = (P_g  \calI ^t )(\sum_{s, t \in S}1_{v^\mo g s t^\mo})\\
& = P_g   (\sum_{s, t \in S}1_{ts^\mo g^\mo v})\\
& = \sum_{s, t \in S}1_{g ts^\mo g^\mo v}\\
& = (\sum_{s, t \in S}P_{g ts^\mo g^\mo} )(1_ v)
\end{align*}
hold for any $v\in G$, and hence, $P_g (PA\calI )^2 P_g^\mo = \sum_{s, t \in S}P_{g ts^\mo g^\mo}$, and $(PA\calI )^2 = \sum_{s, t \in S}P_{ts^\mo }$.

Let $x,y$ be elements of $G$. Suppose $y$ is adjacent to $x$, i.e., $y = \sigma(x^\mo s)$ holds for some $s\in S$. Since $C_\Sigma(G, S)^\sigma$ is undirected, for some $t\in S$, $x = \sigma(y^\mo t)$ holds, i.e., $\sigma(x^\mo s) \sigma^\mo (x)$ lies in $S$ for any $x\in G, s\in S$. Taking $x=e$, it follows that $S = \sigma(S)$. Hence, $\sigma(x^\mo ) s \sigma^\mo (x)$ lies in $S$ for any $x\in G, s\in S$. Taking $x = \sigma^\mo (g^\mo)$, it follows that $g s \sigma^\mo (\sigma^\mo (g^\mo))$ lies in $S$ for any $g\in G, s\in S$, i.e., $g S \sigma^\mo (\sigma^\mo (g^\mo)) = S$. Consequently, 
\begin{align*}
P_g (PA\calI )^2 P_g^\mo 
& = \sum_{s, t \in S}P_{g ts^\mo g^\mo}\\
& = \sum_{s, t \in S}P_{g t  \sigma^\mo (\sigma^\mo (g^\mo))  \sigma^\mo (\sigma^\mo (g)) s^\mo g^\mo}\\
& = \sum_{s, t \in S}P_{g t  \sigma^\mo (\sigma^\mo (g^\mo))  (gs \sigma^\mo (\sigma^\mo (g^\mo)) )^\mo}\\
&= \sum_{s, t \in S}P_{ts^\mo}\\
&= (PA\calI )^2 .
\end{align*}
So, the permutation operators on $\ell^2(G)$, corresponding to the left multiplication map by the elements of $G$, commute with the square of the adjacency operator of $C_\Sigma(G, S)^\sigma$, i.e., \cref{Cond}(5) holds for $C_\Sigma(G, S)^\sigma$. 
\end{proof}

\section{Acknowledgements}
The author acknowledges the INSPIRE Faculty Award (IFA18-MA123) from the Department of Science and Technology, Government of India.

\end{document}